\newtheorem{theorem}{Theorem}[section]
\newtheorem{proposition}[theorem]{Proposition}
\newtheorem{corollary}[theorem]{Corollary}
\theoremstyle{definition}
\newtheorem{definition}[theorem]{Definition}
\newtheorem{example}[theorem]{Example}
\theoremstyle{remark}
\numberwithin{equation}{section}
\begin{document}
\setcounter{page}{1}


\title[Automatic continuity of generalized derivations]{Automatic continuity of new generalized derivations}
\author[A. Hosseini]{Amin Hosseini}
\address{Amin Hosseini\newline \indent Kashmar Higher Education Institute, Kashmar, Iran}
\email{\textcolor[rgb]{0.00,0.00,0.84}{hosseini.amin82@gmail.com, a.hosseini@kashmar.ac.ir}}

\author[C. Park]{Choonkil Park$^*$}
\address {Choonkil Park\newline \indent Department of Mathematics,
Research Institute for Convergence of Basic Science, Hanyang University, Seoul 04763, Korea}
\email{\textcolor[rgb]{0.00,0.00,0.84}{baak@hanyang.ac.kr}}


\subjclass[2010]{Primary 47B48, Secondary 47B47, 46H40.}

\keywords{derivation, strongly generalized derivation of order $n$, automatic continuity, Banach algebra, $C^{\ast}$-algebra}

\date{Received: xxxxxx; Revised: yyyyyy; Accepted: zzzzzz.
\newline \indent $^{*}$ Corresponding author: Choonkil Park (email: baak@hanyang.ac.kr, fax: +82-2-2281-0019, orcid: 0000-0001-6329-8228)}

\begin{abstract} Let $\mathcal{A}$ and $\mathcal{B}$ be two algebras and let $n$ be a positive integer. A linear mapping $D:\mathcal{A} \rightarrow \mathcal{B}$ is called a \emph{strongly generalized derivation of order $n$} if there exist  families of linear mappings $\{E_k:\mathcal{A} \rightarrow \mathcal{B}\}_{k = 1}^{n}$,  $\{F_k:\mathcal{A} \rightarrow \mathcal{B}\}_{k = 1}^{n}$, $\{G_k:\mathcal{A} \rightarrow \mathcal{B}\}_{k = 1}^{n}$ and $\{H_k:\mathcal{A} \rightarrow \mathcal{B}\}_{k = 1}^{n}$ which satisfy $D(ab) = \sum_{k = 1}^{n}\left[E_k(a) F_k(b) + G_k(a)H_k(b)\right]$ for all $a, b \in \mathcal{A}$.
The purpose of this article is to study the automatic continuity of such derivations on Banach algebras and $C^{\ast}$-algebras.
\end{abstract} 
\maketitle

\baselineskip=13.8pt

\section{\bf Introduction and preliminaries}

Automatic continuity of derivations is an important topic in the theory of derivations and this topic has a fairly long history. The automatic continuity of a certain class of mappings, e.g.,  strongly generalized derivations, is the study of (algebraic) conditions on linear
maps on a category of Banach algebras, e.g., $C^{\ast}$-algebras, which guarantee that every strongly generalized derivation is continuous. Results on automatic continuity of linear maps defined on Banach algebras comprise a fruitful area of research intensively developed during the last sixty years. The reader is referred to \cite{bo, D, D1,  fh, V} for a deep and extensive study on this subject. 
Let us recall some basic definitions and set the notations which are used in what follows. Let $\mathcal{A}$ be an algebra.
A nonzero linear functional $\varphi$ on $\mathcal{A}$ is called a \emph{character} if $\varphi(ab) = \varphi(a) \varphi(b)$ for all $a,b \in \mathcal{A}$. The set of all characters on $\mathcal{A}$ is denoted by $\Phi_\mathcal{A}$ and is called the character space of $\mathcal{A}$. According to \cite[Proposition 1.3.37]{D}, the kernel of $\varphi$, $\ker \varphi$, is a maximal ideal of $\mathcal{A}$ for every $\varphi \in \Phi_\mathcal{A}$. Recall that an algebra (or ring) $\mathcal{A}$ is called prime if for $a, b \in \mathcal{A}$, $a \mathcal{A} b = \{0\}$ implies that $a = 0$ or $b = 0$, and is semiprime if for $a\in \mathcal{A}$, $a \mathcal{A} a = \{0\}$ implies that $a = 0$.

Now we  introduce a new class of derivations called strongly generalized derivation of order $n$. Let $\mathcal{A}$ and $\mathcal{B}$ be two algebras and  $n$ be a positive integer. A linear mapping $D:\mathcal{A} \rightarrow \mathcal{B}$ is called a \emph{strongly generalized derivation of order $n$} if there exist families of linear mappings $\{E_k:\mathcal{A} \rightarrow \mathcal{B}\}_{k = 1}^{n}$,  $\{F_k:\mathcal{A} \rightarrow \mathcal{B}\}_{k = 1}^{n}$, $\{G_k:\mathcal{A} \rightarrow \mathcal{B}\}_{k = 1}^{n}$ and $\{H_k:\mathcal{A} \rightarrow \mathcal{B}\}_{k = 1}^{n}$ which satisfy $$D(ab) = \sum_{k = 1}^{n}\left[E_k(a) F_k(b) + G_k(a)H_k(b)\right]$$ for all $a, b \in \mathcal{A}$. Clearly, for $n = 1$, we have $D(ab) = E(a) F(b) + G(a)H(b)$ for all $a, b \in \mathcal{A}$, where $E,F,G, H:\mathcal{A} \rightarrow \mathcal{B}$ are linear mappings. If $D:\mathcal{A} \rightarrow \mathcal{B}$ is a strongly generalized derivation of order one associated with the mappings $E, F, G, H: \mathcal{A} \rightarrow \mathcal{B}$, then we say $D$ is an $(E, F, G, H)$-derivation. Also, if $D$ is a strongly generalized derivation of order $n$ associated with the families of linear mappings $\{E_k:\mathcal{A} \rightarrow \mathcal{B}\}_{k = 1}^{n}$,  $\{F_k:\mathcal{A} \rightarrow \mathcal{B}\}_{k = 1}^{n}$, $\{G_k:\mathcal{A} \rightarrow \mathcal{B}\}_{k = 1}^{n}$ and $\{H_k:\mathcal{A} \rightarrow \mathcal{B}\}_{k = 1}^{n}$, we say $D$ is an $\big(\{E_k\}_{k = 1}^{n}, \{F_k\}_{k = 1}^{n}, \{G_k\}_{k = 1}^{n}, \{H_k\}_{k = 1}^{n}  \big)$-derivation. As can be seen, if $D$ is a strongly generalized derivation of order one, then it covers the notion of derivation (if $D = E = H$ and $F = G = I$), the notion of generalized $(\sigma, \tau)$-derivation associated with a linear mapping $d$ (if $D = E$, $F = \sigma$, $G = \tau$ and $H = d$), the notion of left $\sigma$-centralizer (if $D = E$, $F = \sigma$ and $G$ or $H$ is zero), the notion of right $\tau$-centralizer (if $E$ or $F$ is zero, $G = \tau$ and $H = D$), the notion of generalized derivation associated with a mapping $d$ (if $D = E$, $F = G = I$ and $H = d$), the notion of homomorphism (if $D = E = F$ and $G = 0$ or $H = 0$), and the notion of ternary derivation (if $F = G = I$). Let $\mathcal{A}$ be an algebra. Recall that a triple of linear maps $(D, E, H)$ of $\mathcal{A}$ is a ternary derivation of $\mathcal{A}$ if $D(ab) = E(a)b + aH(b)$ for all $a, b \in \mathcal{A}$.


Derivations are used in quantum mechanics (see \cite{B2, B3}), and it is interesting to note that the applications of generalized types of derivations, such as generalized derivations and $(\sigma, \tau)$-derivations, to important physical topics have been recently studied. See, for example, \cite{Hel} for the application of generalized derivations in general relativity, and \cite{E, H5} for the application of $(\sigma, \tau)$-derivations in theoretical physics. Therefore, it is possible that \emph{strongly generalized derivations of order $n$} might be considered by physicists in the future and used in the study of physical topics. Hence, it is interesting to investigate details of this

The main objective of this paper is to obtain some conclusions about the automatic continuity of strongly generalized derivations of order one on Banach algebras and $C^{\ast}$-algebras. Let us give a brief background in this regard. In 1958, Kaplansky \cite{K} conjectured that every derivation on a $C^{\ast}$-algebra is continuous. Two years later, Sakai \cite{S} answered this conjecture. Indeed, he proved that every derivation on a $C^{\ast}$-algebra is automatically continuous and later in 1972, Ringrose \cite{R}, by using the pioneering work of Bade and Curtis \cite{B1} concerning the automatic continuity of a module homomorphism between bimodules over $C(K)$-spaces, showed that every derivation from a $C^{\ast}$-algebra $\mathcal{A}$ into a Banach $\mathcal{A}$-bimodule is automatically continuous. Also, Johnson and Sinclair \cite{J} investigated the continuity of derivations on semisimple Banach algebras. In addition, in an interesting article, Peralta and Russo \cite{P} investigated automatic continuity of derivations on $C^{\ast}$-algebras and $JB^{\ast}$-triples. 
Moreover, Hou and Ming \cite{Hu} proved that if $\mathcal{X}$ is simple and $\sigma, \tau$ are surjective and continuous mappings on $B(\mathcal{X})$, then every $(\sigma, \tau)$-derivation on $B(\mathcal{X})$ is continuous. Recently, Hosseini \cite{H4} studied automatic continuity of $(\psi,\phi)$-derivations.  It is worth to note that there are some other results on automatic continuity in the literature. For example,  Johnson proved in \cite{J87} that every generalized $\ast$-homomorphism between $C^{\ast}$-algebras is continuous, while several extensions to generalized triple homomorphisms from a $C^{\ast}$-algebra or
from a $JB^{\ast}$-triple are established in \cite{GP13}. We further know from \cite{EP18} that every generalized derivation on a von Neumann algebra and every linear mapping on a von Neumann algebra which is a derivation or a triplet derivation at zero is automatically continuous. Furthermore, every generalized Jordan derivation from a $C^{\ast}$-algebra into a Jordan Banach module is continuous \cite{GH18}.

Now, we state some of the results in this paper.
Suppose that $\mathcal{A}$ and $\mathcal{B}$ are two complex algebras such that $\mathcal{A}$ is simple and unital with identity element $\textbf{1}$ , and $\Phi_{\mathcal{B}} \neq \phi$. Let $D: \mathcal{A} \rightarrow \mathcal{B}$ be a $(D, F, F, H)$-derivation such that $\varphi_0 (D(\textbf{1})) = \varphi_0 (H(\textbf{1}))\neq 0$ and $\varphi_0 (D(a_0)) \neq 0$ for some $a_0 \in \mathcal{A} \backslash \{\textbf{1}\}$ and some $\varphi_0 \in \Phi_{\mathcal{B}}$. Then $dim(\mathcal{A}) = 1$ and $dim(D(\mathcal{A})), dim(F(\mathcal{A})),$ $dim(H(\mathcal{A})) \leq 1$. In addition, if $\mathcal{A}$ and $\mathcal{B}$ are two normed algebras, then $D$, $F$ and $H$ are continuous (see Theorem \ref{9}).

In Theorem \ref{11} we also obtain the following:\\ Let $\mathcal{A}$ be a unital $C^{\ast}$-subalgebra of $B(\mathfrak{H})$, the $C^{\ast}$-algebra of all bounded linear operators on a Hilbert space  $\mathfrak{H}$, and let $D:\mathcal{A} \rightarrow B(\mathfrak{H})$ be an $\ast-(E, F, G, H)$-derivation, i.e. $D, E, F, G$ and $H$ are $\ast$-mappings. Then the following statements hold:\\
(i) If we suppose that $E(I) = H(I) = I$, where $I$ is the identity mapping on $\mathfrak{H}$, $H(a)D(I)E(b) = - E(a)D(I)H(b)$ for all $a, b \in \mathcal{A}$ and also $(E + H)$ is a $D$-continuous mapping, then $D$ is automatically continuous.\\
(ii) If we suppose that $F(I) = G(I) = I$, $G(a)D(I)F(b) = - F(a)D(I)G(b)$ for all $a, b \in \mathcal{A}$ and also $(F + G)$ is a $D$-continuous mapping, then $D$ is automatically continuous. Using the just commented theorem, we also obtain a result on the automatic continuity of generalized $(\sigma, \tau)$-derivations on $C^{\ast}$-algebras.

Moreover, we prove that if $\mathcal{A}$ is a Banach algebra, $\mathcal{B}$ is a simple Banach algebra and $D:\mathcal{A} \rightarrow \mathcal{B}$ is both a continuous $(D, F, G, H)$-derivation and continuous $(H, F, G, D)$-derivation such that $H$ is a continuous linear mapping and $F, G$ are surjective, then $F$ and $G$ are continuous or $H$ and $D$ are identically zero.

Some other results concerning the continuity and characterization of $(E, F, G, H)$-derivations are also discussed.

\section {\bf Main results}

Throughout the paper, $I$ denotes the identity mapping on an algebra and $\textbf{1}$ stands for the identity element of any unital algebra.  We begin with the following definition.

\begin{definition}
Let $\mathcal{A}$ and $\mathcal{B}$ be two algebras and $n$ be a positive integer. A linear mapping $D:\mathcal{A} \rightarrow \mathcal{B}$ is called a \emph{strongly generalized derivation of order $n$} if there exist families $\{E_k:\mathcal{A} \rightarrow \mathcal{B}\}_{k = 1}^{n}$,  $\{F_k:\mathcal{A} \rightarrow \mathcal{B}\}_{k = 1}^{n}$, $\{G_k:\mathcal{A} \rightarrow \mathcal{B}\}_{k = 1}^{n}$ and $\{H_k:\mathcal{A} \rightarrow \mathcal{B}\}_{k = 1}^{n}$ of linear mappings which satisfy $$D(ab) = \sum_{k = 1}^{n}\left[E_k(a) F_k(b) + G_k(a)H_k(b)\right]$$ for all $a, b \in \mathcal{A}$. Clearly, for $n = 1$, we have $D(ab) = E(a) F(b) + G(a)H(b)$ for all $a, b \in \mathcal{A}$, where $E,F,G, H:\mathcal{A} \rightarrow \mathcal{B}$ are linear mappings.
\end{definition}

If $D:\mathcal{A} \rightarrow \mathcal{B}$ is a strongly generalized derivation of order one associated with the mappings $E, F, G, H: \mathcal{A} \rightarrow \mathcal{B}$, then we say $D$ is an $(E, F, G, H)$-derivation. Also, if $D$ is a strongly generalized derivation of order $n$ associated with the families $\{E_k:\mathcal{A} \rightarrow \mathcal{B}\}_{k = 1}^{n}$,  $\{F_k:\mathcal{A} \rightarrow \mathcal{B}\}_{k = 1}^{n}$, $\{G_k:\mathcal{A} \rightarrow \mathcal{B}\}_{k = 1}^{n}$ and $\{H_k:\mathcal{A} \rightarrow \mathcal{B}\}_{k = 1}^{n}$, we say $D$ is an $\big(\{E_k\}_{k = 1}^{n}, \{F_k\}_{k = 1}^{n}, \{G_k\}_{k = 1}^{n}, \{H_k\}_{k = 1}^{n}  \big)$-derivation. As stated in Introduction, concepts such as derivation, generalized derivation and generalized $(\sigma, \tau)$-derivation are examples of strongly generalized derivations of order one. Here, we give some examples of strongly generalized derivations of order $ n \geq 2.$ Mirzavaziri and Omidvar Tehrani \cite{mir} introduced the concept of a ($\delta, \varepsilon$)-double derivation as follows:\\
Let $\mathcal{A}$ be an algebra and let $\delta, \varepsilon : \mathcal{A} \rightarrow \mathcal{A}$ be linear mappings. A linear mapping $d : \mathcal{A} \rightarrow \mathcal{A}$ is called a ($\delta, \varepsilon$)-double derivation if
\begin{align*}
d(ab) = d(a) b + a d(b) + \delta(a) \varepsilon (b) + \varepsilon(a) \delta(b)
\end{align*}
for all $a, b \in \mathcal{A}$. By a $\delta$-double derivation we mean a ($\delta, \delta$)-double derivation. For more material about $(\delta, \varepsilon)$-double derivations, see \cite{H4, mir}.

\begin{example} 
Every $(\delta, \varepsilon)$-double derivation is a strongly generalized derivation of order 2.
\end{example}

\begin{example}
 Let $\mathcal{A}$ and $\mathcal{B}$ be two algebras. A sequence $\{d_n\}$ of linear mappings from $\mathcal{A}$ into $\mathcal{B}$ is called a higher derivation if $d_n(ab) = \sum_{k = 0}^{n}d_{n - k}(a) d_k(b)$ for all $a, b \in \mathcal{A}$ and all nonnegative integer $n$. Let $n$ be a positive integer and $\{d_n\}$ be a higher derivation. Then every $d_n$ is a strongly generalized derivation of order $m$ in which
\[m = \left\{{\begin{array}{*{20}{c}}
\begin{array}{l}
\frac{n + 2}{2} \\
\frac{n + 1}{2}
\end{array}&
\begin{array}{l}
n \ is \ even,\\
n \ is \ odd.
\end{array}
\end{array}} \right.\]
\end{example}

We now establish the following auxiliary result.

\begin{proposition}\label{6} Let $\mathcal{A}$ be a unital ring,  $\mathcal{B}$ be a ring which is a domain and  $D:\mathcal{A} \rightarrow \mathcal{B}$ be a $(D, F, F, H)-$derivation such that $D(\textbf{1}) = H(\textbf{1})$. If $D(a_0) \neq 0$ for some $a_0 \in \mathcal{A} \backslash \{\textbf{1}\}$ and $D(\textbf{1}) \neq 0$, then $ker (F) = ker (D) \subseteq ker (H)$ and further $ker (D)$ is an ideal of $\mathcal{A}$.
\end{proposition}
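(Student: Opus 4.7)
The plan is to extract everything from the two obvious unital substitutions in the defining identity $D(ab) = D(a)F(b) + F(a)H(b)$. Taking $b = \mathbf{1}$ and using $H(\mathbf{1}) = D(\mathbf{1})$ yields
\begin{equation*}
D(a) = D(a) F(\mathbf{1}) + F(a) D(\mathbf{1}), \tag{$\ast$}
\end{equation*}
while taking $a = \mathbf{1}$ yields
\begin{equation*}
D(b) = D(\mathbf{1}) F(b) + F(\mathbf{1}) H(b). \tag{$\ast\ast$}
\end{equation*}
The entire proposition should drop out of these two identities, the domain hypothesis on $\mathcal{B}$, and the assumption $D(\mathbf{1}) \neq 0$.

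The first substantive step is to rule out the two degenerate values $F(\mathbf{1}) = 0$ and $F(\mathbf{1}) = 1_{\mathcal{B}}$. Setting $a = \mathbf{1}$ in $(\ast)$ gives $D(\mathbf{1}) = D(\mathbf{1}) F(\mathbf{1}) + F(\mathbf{1}) D(\mathbf{1})$, so $F(\mathbf{1}) = 0$ would force $D(\mathbf{1}) = 0$, contradicting the hypothesis. If instead $F(\mathbf{1}) = 1_{\mathcal{B}}$, then $(\ast)$ collapses to $F(a) D(\mathbf{1}) = 0$ for every $a$; the domain property combined with $D(\mathbf{1}) \neq 0$ would force $F \equiv 0$, which in turn would make $D$ vanish on every product, giving $D(\mathbf{1}) = D(\mathbf{1}\cdot\mathbf{1}) = 0$, again a contradiction. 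Hence both $F(\mathbf{1})$ and $1_{\mathcal{B}} - F(\mathbf{1})$ are nonzero and, in the domain $\mathcal{B}$, therefore non-zero-divisors.

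With this in hand, the claim becomes a short cancellation argument. Rewriting $(\ast)$ as $D(a)\bigl(1_{\mathcal{B}} - F(\mathbf{1})\bigr) = F(a) D(\mathbf{1})$, the implication $F(a) = 0 \Rightarrow D(a) = 0$ follows by cancelling $1_{\mathcal{B}} - F(\mathbf{1})$ on the left side, and $D(a) = 0 \Rightarrow F(a) = 0$ by cancelling $D(\mathbf{1})$ on the right side, so $\ker F = \ker D$. If $a \in \ker D$, then also $F(a) = 0$, and $(\ast\ast)$ reduces to $F(\mathbf{1}) H(a) = 0$; since $F(\mathbf{1}) \neq 0$, this yields $H(a) = 0$, proving $\ker D \subseteq \ker H$. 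Finally, for $a \in \ker D$ and any $b \in \mathcal{A}$ we have $F(a) = H(a) = 0$, and the defining identity gives $D(ab) = D(a)F(b) + F(a)H(b) = 0$ and $D(ba) = D(b)F(a) + F(b)H(a) = 0$, so $\ker D$ is a two-sided ideal. The main obstacle in the whole argument is the dichotomy for $F(\mathbf{1})$ — particularly ruling out $F(\mathbf{1}) = 1_{\mathcal{B}}$, which requires the global chain $F \equiv 0 \Rightarrow D|_{\mathcal{A}^2} \equiv 0 \Rightarrow D(\mathbf{1}) = 0$; once this dichotomy is established, the rest is formal cancellation in $\mathcal{B}$.
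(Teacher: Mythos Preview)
Your proof is correct and follows essentially the same route as the paper's: substitute $b=\mathbf{1}$ and $a=\mathbf{1}$ into the defining identity to get $(\ast)$ and $(\ast\ast)$, rule out the degenerate values of $F(\mathbf{1})$, and then use cancellation in the domain $\mathcal{B}$ to obtain the kernel equalities and the ideal property. Your exclusion of $F(\mathbf{1})=1_{\mathcal{B}}$ is slightly more roundabout than the paper's (plugging $a=\mathbf{1}$ into $(\ast)$ directly gives $D(\mathbf{1})=2D(\mathbf{1})$, hence $D(\mathbf{1})=0$), and, as your argument implicitly shows, the hypothesis $D(a_0)\neq 0$ is not actually needed for the stated conclusion.
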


\begin{proof} First of all, we show that if $D$ is a $(D, F, F, H)-$derivation such that $D(\textbf{1}) = H(\textbf{1})$ and $D(a_0) \neq 0$ for some $a_0 \in \mathcal{A} \backslash \{\textbf{1}\}$, then $D(\textbf{1}) = 0$ if and only if $F(\textbf{1}) = \textbf{1}$. It is evident that if $F(\textbf{1}) = \textbf{1}$, then $D(\textbf{1}) = H(\textbf{1}) = 0$. Conversely, assume that $D(\textbf{1}) = 0$. So we have $D(a_0) = D(a_0)F(\textbf{1}) + F(a_0) H(\textbf{1}) =  D(a_0)F(\textbf{1})$. Since $\mathcal{B}$ is a domain and $D(a_0) \neq 0$, we get that $F(\textbf{1}) = \textbf{1}$. Hence, $D(\textbf{1}) = 0$ if and only if $F(\textbf{1}) = \textbf{1}$ and consequently, we deduce that $D(\textbf{1}) \neq 0$ if and only if $F(\textbf{1}) \neq \textbf{1}$.\\
For $a \in ker (F)$, we have $D(a) = D(a)F(\textbf{1}) + F(a)H(\textbf{1}) = D(a)F(\textbf{1})$, and since $\mathcal{B}$ is a domain and $F(\textbf{1}) \neq \textbf{1}$, it is deduced that $D(a) = 0$. This means that $a \in \ker (D)$. So $ker (F) \subseteq ker (D)$. Now, suppose that $a \in ker (D)$. Thus, $0 = D(a) = D(a) F(\textbf{1}) + F(a) H(\textbf{1}) = F(a) H(\textbf{1})$. Since $\mathcal{B}$ is a domain and $H(\textbf{1}) \neq 0$, we get that $F(a) = 0$. It means that $a \in \ker (F)$ and so $ker (D) \subseteq ker (F)$. Therefore, we obtain that $ker (F) = ker (D)$. For any $a \in ker (D)$, we have $0 = D(a) = D(\textbf{1}) F(a) + F(\textbf{1}) H(a) = F(\textbf{1}) H(a)$, and since $\mathcal{B}$ is a domain and $D(\textbf{1}) \neq 0$, we arrive at $H(a) = 0$, which  means that $ ker (D) \subseteq ker (H)$. Our next task is to show that $ker (D)$ is an ideal of $\mathcal{A}$. Let $a \in ker(D)$ and $b$ be an arbitrary element of $\mathcal{A}$. We know that $ker (F) = ker (D) \subseteq ker (H)$. So we have $$ D(a b) = D(a) F (b) + F(a) H(b) = 0,$$ which means that $ker (D)$ is a right ideal of $\mathcal{A}$. Also, we have $$ D(b a) = D(b) F (a) + F(b) H(a) = 0,$$ which means that $ker (D)$ is a left ideal of $\mathcal{A}$. Consequently, $ker (D)$ is an ideal of $\mathcal{A}$, as desired.
\end{proof}

\begin{theorem} \label{9} 
Suppose that $\mathcal{A}$ and $\mathcal{B}$ are two complex algebras such that $\mathcal{A}$ is unital and simple and also $\Phi_{\mathcal{B}} \neq \phi$. Let $D: \mathcal{A} \rightarrow \mathcal{B}$ be a $(D, F, F, H)$-derivation such that $\varphi_0 (D(\textbf{1})) = \varphi_0 (H(\textbf{1}))\neq 0$ and $\varphi_0 (D(a_0)) \neq 0$ for some $a_0 \in \mathcal{A} \backslash \{\textbf{1}\}$ and some $\varphi_0 \in \Phi_{\mathcal{B}}$. Then $dim(\mathcal{A}) = 1$ and $dim(D(\mathcal{A})), dim(F(\mathcal{A})), dim(H(\mathcal{A})) \leq 1$. In addition, if $\mathcal{A}$ and $\mathcal{B}$ are in addition two normed algebras, then $D$, $F$ and $H$ are continuous.
\end{theorem}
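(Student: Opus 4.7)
The plan is to scalarize the problem by composing with the character $\varphi_0$ and then invoke Proposition \ref{6} on the resulting linear functionals over the integral domain $\mathbb{C}$. Simplicity of $\mathcal{A}$ will then collapse its dimension.

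First I would set $d := \varphi_0 \circ D$, $f := \varphi_0 \circ F$, $h := \varphi_0 \circ H$, three linear functionals from $\mathcal{A}$ to $\mathbb{C}$. Applying $\varphi_0$ to the defining identity $D(ab) = D(a)F(b) + F(a)H(b)$ and using that $\varphi_0$ is multiplicative gives
\[
d(ab) = d(a)f(b) + f(a)h(b) \qquad (a,b \in \mathcal{A}),
\]
so $d$ is a $(d,f,f,h)$-derivation from the unital ring $\mathcal{A}$ into the integral domain $\mathbb{C}$. The hypotheses translate into $d(\mathbf{1}) = h(\mathbf{1}) \neq 0$ and $d(a_0) \neq 0$ for some $a_0 \in \mathcal{A} \setminus \{\mathbf{1}\}$, which are exactly the assumptions needed to apply Proposition \ref{6}.

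Second, Proposition \ref{6} yields that $\ker(d)$ is an ideal of $\mathcal{A}$. Since $\mathcal{A}$ is simple, $\ker(d)$ is either $\{0\}$ or $\mathcal{A}$; the existence of $a_0$ with $d(a_0) \neq 0$ rules out the latter, hence $\ker(d) = \{0\}$. This means $d \colon \mathcal{A} \to \mathbb{C}$ is an injective linear map, so $\dim(\mathcal{A}) \leq \dim_{\mathbb{C}} \mathbb{C} = 1$, and unitality forces $\dim(\mathcal{A}) = 1$, i.e.\ $\mathcal{A} = \mathbb{C}\mathbf{1}$.

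Third, since every $a \in \mathcal{A}$ has the form $\lambda \mathbf{1}$ with $\lambda \in \mathbb{C}$, linearity gives $D(\mathcal{A}) = \mathbb{C}\, D(\mathbf{1})$, $F(\mathcal{A}) = \mathbb{C}\, F(\mathbf{1})$ and $H(\mathcal{A}) = \mathbb{C}\, H(\mathbf{1})$, each of dimension at most one. For the normed algebra case, any linear map out of a one-dimensional (hence finite-dimensional) normed space is automatically bounded, so $D$, $F$ and $H$ are continuous.

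The whole argument is essentially a single application of Proposition \ref{6} after passing through the character, and no genuine obstacle arises. The only point requiring slight care is verifying that $d$, $f$, $h$ inherit all the algebraic hypotheses of Proposition \ref{6} (in particular that $\mathbb{C}$ is a domain and that $d(\mathbf{1}) = h(\mathbf{1}) \neq 0$), but these are immediate from the hypotheses on $\varphi_0$.
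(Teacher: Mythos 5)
Your proposal is correct and follows essentially the same route as the paper: compose with $\varphi_0$, apply Proposition \ref{6} to the scalarized $(d,f,f,h)$-derivation into $\mathbb{C}$, use simplicity of $\mathcal{A}$ to force $\ker(d)=\{0\}$ and hence $\dim(\mathcal{A})=1$, and then conclude the dimension bounds and continuity. The only difference is cosmetic: where the paper cites Kreyszig for the final dimension and boundedness claims, you argue them directly via $\mathcal{A}=\mathbb{C}\mathbf{1}$ and the boundedness of linear maps on finite-dimensional normed spaces.
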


\begin{proof} Let $\varphi_0$ be a character on $\mathcal{B}$ such that $\varphi_0 (D(\textbf{1})) = \varphi_0 (H(\textbf{1}))\neq 0$ and $\varphi_0 (D(a_0)) \neq 0$ for some $a_0 \in \mathcal{A} \backslash \{\textbf{1}\}$. Let $\varphi_0 F = F_1$, $\varphi_0 H = H_1$ and $\varphi_0 D = D_1$. According to Proposition \ref{6}, $ker(D_1) = ker(F_1)$ and $ker(D_1)$ is an ideal of $\mathcal{A}$. Since $\mathcal{A}$ is simple, $ker(D_1) = \mathcal{A}$ or $ker (D_1) = \{0\}$ and since we are assuming that $D_1(a_0) \neq 0$ for some $a_0 \in \mathcal{A} \backslash \{\textbf{1}\}$, $ker(D_1) \neq \mathcal{A}$. So $ker (D_1) = \{0\}$ and this means that $D_1$ is injective. It is a well-known fact that every non-zero linear functional is surjective. Hence, $D_1: \mathcal{A} \rightarrow \mathbb{C}$ is surjective, and therefore $D_1$ is bijective, and clearly, $dim(\mathcal{A}) = 1$. It follows from \cite[Theorem 2.6-9]{Ke} that $dim(D(\mathcal{A})), dim(F(\mathcal{A})), dim(H(\mathcal{A})) \leq 1$. If $\mathcal{A}$ and $\mathcal{B}$ are in addition two normed algebras, then \cite[Theorem 2.7-8]{Ke} implies the continuity of $D$, $F$ and $H$.
\end{proof}

The concept of left (resp. right) continuity of a mapping was defined in \cite{He} and we state the definition here.

\begin{definition} \label{10} Let $\mathcal{A}$ and $\mathcal{B}$ be two normed algebras and  $T, S:\mathcal{A} \rightarrow \mathcal{B}$ be two mappings. The mapping $T$ is left (resp. right) $S$-continuous if $\lim_{x \rightarrow 0}T(x) S(b) = 0$ (resp. $\lim_{x \rightarrow 0}S(b) T(x) = 0$) for all $b \in \mathcal{B}$. If $T$ is both left and right $S$-continuous, then it is simply called $S$-continuous.
\end{definition}

\begin{theorem} \label{11} 
Let $\mathcal{A}$ be a unital $C^{\ast}$-subalgebra of the $C^{\ast}$-algebra $B(\mathfrak{H})$. Let $D:\mathcal{A} \rightarrow B(\mathfrak{H})$ be a $\ast-(E, F, G, H)$-derivation, i.e., $D, E, F, G$ and $H$ are $\ast$-mappings. The following  hold:\\
(i) Suppose $E(I) = H(I) = I$, where $I$ is the identity mapping on $\mathfrak{H}$, $H(a)D(I)E(b) = - E(a)D(I)H(b)$ for all $a, b \in \mathcal{A}$, and $(E + H)$ is a $D$-continuous mapping. Then $D$ is automatically continuous.\\
(ii) Suppose $F(I) = G(I) = I$, $G(a)D(I)F(b) = - F(a)D(I)G(b)$ for all $a, b \in \mathcal{A}$, and $(F + G)$ is a $D$-continuous mapping. Then $D$ is automatically continuous.
\end{theorem}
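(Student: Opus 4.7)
The plan is to reduce the defining relation to the clean symmetric identity
\[
D(ab) = K(a)D(b) + D(a)K(b), \qquad K := \tfrac12(E+H),
\]
and then push the $D$-continuity of $K$ through the closed graph theorem. I will describe part (i); part (ii) is entirely parallel after swapping the roles of $(E,H)$ with $(F,G)$.

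For the algebraic reduction, setting $a=I$ and $b=I$ in turn in $D(ab)=E(a)F(b)+G(a)H(b)$ and using $E(I)=H(I)=I$ I would solve
\[
F(b)=D(b)-G(I)H(b), \qquad G(a)=D(a)-E(a)F(I),
\]
together with $D(I)=F(I)+G(I)$. Substituting back into the defining identity and simplifying yields
\[
D(ab) = E(a)D(b) + D(a)H(b) - E(a)D(I)H(b).
\]
Specialising the hypothesis $H(a)D(I)E(b)=-E(a)D(I)H(b)$ to $a=b=I$ gives $D(I)=-D(I)$, so $D(I)=0$, and the displayed formula collapses to $D(ab)=E(a)D(b)+D(a)H(b)$. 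I then exploit that $D$, $E$, and $H$ are $\ast$-mappings: applying this identity to $(ab)^{*}=b^{*}a^{*}$ and taking adjoints produces the companion identity $D(ab)=H(a)D(b)+D(a)E(b)$, and averaging the two yields the symmetric form above, with $K(I)=I$ and $K$ inheriting $D$-continuity from $E+H$.

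To conclude, I invoke the closed graph theorem: suppose $a_n\to 0$ in $\mathcal{A}$ and $D(a_n)\to y$ in $B(\mathfrak{H})$; the goal is $y=0$. For each fixed $b\in\mathcal{A}$ the symmetric identity gives $D(a_n b)=K(a_n)D(b)+D(a_n)K(b)$; left $D$-continuity of $K$ forces $K(a_n)D(b)\to 0$, while $D(a_n)K(b)\to yK(b)$, so $yK(b)$ lies in the separating space $\mathfrak{S}(D)$, and symmetrically $K(b)y\in\mathfrak{S}(D)$. Hence $\mathfrak{S}(D)$ is invariant under two-sided multiplication by $K(\mathcal{A})$ and, because $D$ is a $\ast$-mapping, also closed under the involution. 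The main obstacle is the last mile: combining $K(I)=I$, this bimodule and $\ast$-structure, and the $C^{\ast}$-algebra structure of $\mathcal{A}$ in a Ringrose/Johnson-type bootstrap to conclude $\mathfrak{S}(D)=\{0\}$, whence $D$ is continuous. Part (ii) follows by the analogous reduction: from $F(I)=G(I)=I$ and $G(a)D(I)F(b)=-F(a)D(I)G(b)$ one again obtains $D(I)=0$, $D(ab)=G(a)D(b)+D(a)F(b)$, and after $\ast$-symmetrising the same identity with $K':=\tfrac12(F+G)$; the closed graph argument then proceeds verbatim.
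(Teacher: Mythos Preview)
Your algebraic reduction is essentially the paper's: it too derives $F(b)=D(b)-G(I)H(b)$, $G(a)=D(a)-E(a)F(I)$, substitutes to obtain
\[
D(ab)=D(a)H(b)+E(a)D(b)-E(a)D(I)H(b),
\]
uses the $\ast$-property to get the companion identity with $-H(a)D(I)E(b)$ in place of the last term, adds the two, and invokes the hypothesis $H(a)D(I)E(b)=-E(a)D(I)H(b)$ to cancel the extra terms, arriving at $D(ab)=D(a)\Sigma(b)+\Sigma(a)D(b)$ with $\Sigma=\tfrac12(E+H)$. (Your observation that the hypothesis at $a=b=I$ already forces $D(I)=0$ is a pleasant shortcut, but it lands in the same place.)

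The gap is exactly where you flag it. The paper does \emph{not} attempt the separating-space bootstrap you sketch; instead, once $D$ is recognised as a $\Sigma$-derivation with $\Sigma$ $D$-continuous, it simply invokes \cite[Theorem~4.4]{He} (Hejazian--Janfada--Mirzavaziri), which is precisely the automatic-continuity theorem for $\sigma$-derivations on $C^{\ast}$-algebras under $D$-continuity of $\sigma$. Your direct attempt, as it stands, does not close: knowing that $\mathfrak{S}(D)$ is a $\ast$-closed $K(\mathcal{A})$-sub-bimodule of $B(\mathfrak{H})$ with $K(I)=I$ gives no contradiction by itself --- $\mathfrak{S}(D)$ is already a closed subspace of $B(\mathfrak{H})$, and nothing you have written rules out its being, say, a nonzero two-sided ideal-like object. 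The Ringrose/Johnson machinery you allude to needs a module action of a $C^{\ast}$-algebra on the target and a continuity input that ties $D$ back to that action; that is exactly what the cited theorem packages. So either cite \cite[Theorem~4.4]{He} at this point, or reproduce its proof; the bimodule observation alone is not enough. Part (ii) is handled the same way in the paper.
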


\begin{proof} (i) First note that $D(I) = F(I) + G(I)$. For any $a \in \mathcal{A}$, we have $$D(a) = E(a)F(I) + G(a)H(I) = E(a)F(I) + G(a).$$ So we have
\begin{align}\label{2.1}
G(a) = D(a) - E(a)F(I), \ \ \ \ \ \ \ \ \ \ \ \ (a \in \mathcal{A}).
\end{align}
Similarly, we can get
\begin{align}\label{2.2}
F(a) = D(a) - G(I)H(a), \ \ \ \ \ \ \ \ \ \ \ \ (a \in \mathcal{A}).
\end{align}
Using (\ref{2.1}) and (\ref{2.2}), we have the following:
\begin{align*}
D(ab) &= E(a)F(b) + G(a)H(b) \\ & = E(a)(D(b) - G(I)H(b)) + (D(a) - E(a)F(I))H(b) \\ & = E(a)D(b) - E(a) G(I)H(b) + D(a)H(b) - E(a)F(I)H(b) \\ & = D(a)H(b) + E(a)D(b) - E(a) D(I)H(b),
\end{align*}
which means that
\begin{align}\label{2.3}
D(ab) = D(a)H(b) + E(a)D(b) - E(a) D(I)H(b), \ \ \ \ \ \ \ (a, b \in \mathcal{A}).
\end{align}
Using (\ref{2.3}), we have the following:
\begin{align*}
D(ab) &= (D(ab)^{\ast})^{\ast} = \big(D(b^{\ast})H(a^{\ast}) + E(b^{\ast})D(a^{\ast}) - E(b^{\ast}) D(I)H(a^{\ast})\big)^{\ast} \\ & = H(a)D(b) + D(a)E(b) - H(a)D(I)E(b),
\end{align*}
which means that
\begin{align}\label{2.4}
D(ab) = H(a)D(b) + D(a)E(b) - H(a)D(I)E(b), \ \ \ \ \ \ \ (a, b \in \mathcal{A}).
\end{align}
Adding (\ref{2.3}) and (\ref{2.4}) and using the assumption that $H(a)D(I)E(b) = - E(a)D(I)H(b)$ for all $a, b \in \mathcal{A}$, we have 
\begin{align}\label{2.5}
2D(ab) = D(a)(H(b) + E(b)) + (H(a) + E(a))D(b), \ \ \ \ \ \ \ (a, b \in \mathcal{A}).
\end{align}
Considering $\Sigma = \frac{E + H}{2}$ in (\ref{2.5}), we get that
\begin{align*}
D(ab) = D(a)\Sigma(b) + \Sigma(a)D(b), \ \ \ \ \ \ \ \ \ \ \ \ \ \ \ \ (a, b \in \mathcal{A}).
\end{align*}
This means that $D$ is a $\Sigma$-derivation on $\mathcal{A}$ and since  $\Sigma = \frac{E + H}{2}$ is $D$-continuous, it follows from \cite[Theorem 4.4]{He} that $D$ is continuous, as desired.\\
(ii) This part is obtained in the same way as above.
\end{proof}

An immediate consequence of the above theorem about the continuity of generalized $\ast-(\sigma, \tau)$-derivations on $C^{\ast}$-algebras reads as follows:

\begin{corollary} 
 Let $\mathcal{A}$ be a unital $C^{\ast}$-subalgebra of the $C^{\ast}$-algebra $B(\mathfrak{H})$. Let $D:\mathcal{A} \rightarrow B(\mathfrak{H})$ be a generalized $\ast-(\sigma, \tau)$-derivation corresponding to a mapping $H:\mathcal{A} \rightarrow B(\mathfrak{H})$, i.e., $D(ab) = D(a) \sigma(b) + \tau(a) H(b)$ for all $a, b \in \mathcal{A}$. Suppose that $\sigma(I) = \tau(I) = I$, where $I$ is the identity mapping on $\mathfrak{H}$, $\tau(a)D(I)\sigma(b) = - \sigma(a)D(I)\tau(b)$ for all $a, b \in \mathcal{A}$, and $(\sigma + \tau)$ is a $D$-continuous mapping. Then $D$ is automatically continuous.
\end{corollary}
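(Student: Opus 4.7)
My plan is to recognize that a generalized $\ast$-$(\sigma,\tau)$-derivation is nothing but a $\ast$-$(E,F,G,H)$-derivation after the relabeling $E := D$, $F := \sigma$, $G := \tau$, and $H := H$, and then invoke Theorem \ref{11}(ii). Under this identification the defining identity $D(ab) = D(a)\sigma(b) + \tau(a)H(b)$ rewrites as $D(ab) = E(a)F(b) + G(a)H(b)$, and since $D, \sigma, \tau, H$ are all $\ast$-preserving by hypothesis, the tuple $(D, E, F, G, H)$ consists entirely of $\ast$-mappings, matching the standing hypothesis of the theorem.

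The second step is simply to check that each of the three remaining hypotheses of Theorem \ref{11}(ii) translates verbatim. The condition $\sigma(I) = \tau(I) = I$ is literally $F(I) = G(I) = I$; the anti-symmetry condition $\tau(a) D(I) \sigma(b) = -\sigma(a) D(I) \tau(b)$ becomes $G(a) D(I) F(b) = -F(a) D(I) G(b)$ for all $a, b \in \mathcal{A}$; and the $D$-continuity of $\sigma + \tau$ is exactly the $D$-continuity of $F + G$. Once these correspondences are recorded, Theorem \ref{11}(ii) applies directly and yields the automatic continuity of $D$.

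Because the corollary is essentially a dictionary translation of Theorem \ref{11}(ii), I do not anticipate any genuine obstacle. The only subtlety to watch is the ordering of the auxiliary maps: the placement of $\sigma$ and $\tau$ in the product $D(a)\sigma(b) + \tau(a)H(b)$ forces $F = \sigma$ and $G = \tau$ (rather than the opposite identification), and it is exactly this choice that makes the prescribed anti-symmetry condition match the form required by part (ii) of the theorem as opposed to part (i).
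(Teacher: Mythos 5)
Your proof is correct and matches the paper's own argument, which simply invokes Theorem \ref{11}(ii); your identification $E = D$, $F = \sigma$, $G = \tau$ (with all maps $\ast$-preserving) is exactly the intended specialization, and your remark about the ordering of $\sigma$ and $\tau$ correctly pins down why part (ii) rather than part (i) applies.
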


\begin{proof}
It follows from  Theorem \ref{11} (ii).
\end{proof}

\begin{theorem} 
Let $\mathcal{A}$ be a unital $C^{\ast}$-algebra and  $D:\mathcal{A} \rightarrow \mathcal{A}$ be an $(E, I, I, H)$-derivation. Then $D, E$ and $H$ are generalized derivations which are automatically continuous under any of the following:\\
(i) $D$ is a $\ast-(E, I, I, H)$-derivation, that is, $D, E$ and $H$ are $\ast$-mappings.\\
(ii) The $C^{\ast}$-algebra $\mathcal{A}$ is commutative.
\end{theorem}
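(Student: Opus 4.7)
The plan is to first prove the unconditional structural claim that $D$, $E$, and $H$ are generalized derivations, and then to handle the two automatic-continuity cases separately.

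For the structural part, I would evaluate the defining identity $D(ab) = E(a)b + aH(b)$ at $b = \mathbf{1}$ and at $a = \mathbf{1}$ to obtain
\[
E(a) = D(a) - aH(\mathbf{1}),\qquad H(b) = D(b) - E(\mathbf{1})b,\qquad D(\mathbf{1}) = E(\mathbf{1}) + H(\mathbf{1}).
\]
Substituting these expressions back into the defining identity collapses it to
\[
D(ab) = D(a)b + aD(b) - aD(\mathbf{1})b.
\]
Setting $\delta(b) := D(b) - D(\mathbf{1})b$, this rewrites as $D(ab) = D(a)b + a\delta(b)$, which exhibits $D$ as a generalized derivation associated with $\delta$; and a direct check using the same identity gives $\delta(bc) = \delta(b)c + b\delta(c)$, so $\delta$ is in fact an ordinary linear derivation on $\mathcal{A}$. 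A parallel substitution yields $H(ab) = H(a)b + a\delta(b)$ (the same associated derivation) and $E(ab) = E(a)b + a\mu(b)$ with $\mu(b) := \delta(b) + H(\mathbf{1})b - bH(\mathbf{1})$, so $E$ and $H$ are generalized derivations as well.

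For part (i), the critical observation is that $\delta$ is a linear derivation from the $C^{*}$-algebra $\mathcal{A}$ into itself, so Sakai's classical automatic-continuity theorem forces $\delta$ to be bounded. Since $D(b) = D(\mathbf{1})b + \delta(b)$ decomposes $D$ as the sum of a bounded left-multiplication and a bounded map, $D$ is continuous; the formulas $E(a) = D(a) - aH(\mathbf{1})$ and $H(b) = D(b) - E(\mathbf{1})b$ then transfer the continuity to $E$ and $H$. The $\ast$-hypothesis is convenient here since it ensures $D(\mathbf{1})$ is self-adjoint and could alternatively be combined with Ringrose's extension to give the same conclusion via a $\ast$-derivation argument.

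For part (ii), when $\mathcal{A}$ is commutative it is a semisimple Banach algebra, so the Singer--Wermer theorem (combined with the continuity of $\delta$ from Sakai, or directly via Thomas's theorem) forces $\delta \equiv 0$. Therefore $D(b) = D(\mathbf{1})b$, and commutativity reduces the formulas for $E$ and $H$ to $E(a) = (D(\mathbf{1}) - H(\mathbf{1}))a$ and $H(b) = (D(\mathbf{1}) - E(\mathbf{1}))b$, all of which are bounded multiplication operators and hence continuous. The conceptual obstacle in the entire argument is a single step: recognizing the auxiliary map $\delta := D - L_{D(\mathbf{1})}$ as a genuine derivation. That identification is what unlocks Sakai in case (i) and Singer--Wermer in case (ii); once it is in place, the rest of the proof is a routine assembly of standard theorems.
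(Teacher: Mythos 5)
Your proposal is correct, but it takes a genuinely different route from the paper's proof of this theorem --- in fact it coincides with the method the paper uses for the \emph{subsequent} theorem on general unital algebras. The paper uses hypothesis (i) (via $D(ab)=(D(ab)^{\ast})^{\ast}$) or hypothesis (ii) to obtain the symmetric ternary identity $D(ab)=E(a)b+aH(b)=H(a)b+aE(b)$, and then invokes the structure theorem \cite[Theorem 3.1]{H3} to produce a single derivation $\delta$ with $D=\delta+L_{D(\mathbf{1})}$, $E=\delta+L_{E(\mathbf{1})}$, $H=\delta+L_{H(\mathbf{1})}$, before applying Sakai's theorem. You instead evaluate the defining identity at $\mathbf{1}$, collapse it to $D(ab)=D(a)b+aD(b)-aD(\mathbf{1})b$, and verify directly that $\delta=D-L_{D(\mathbf{1})}$ is a derivation; Sakai then gives continuity of $\delta$ and hence of $D$, $E$, $H$. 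Your route is more elementary (no appeal to \cite{H3}) and strictly more general, since it never uses (i) or (ii): it shows the continuity conclusion holds for every $(E,I,I,H)$-derivation on a unital $C^{\ast}$-algebra, with the hypotheses only entering if one wants the finer structure. What it does not recover is exactly that finer structure the paper obtains under (i)/(ii), namely that $E$ also has the form $\delta+L_{E(\mathbf{1})}$ with the \emph{same} $\delta$; unconditionally one only gets $E=\delta+L_{D(\mathbf{1})}-R_{H(\mathbf{1})}$, i.e. $E(ab)=E(a)b+a\mu(b)$ with $\mu=\delta+\operatorname{ad}_{H(\mathbf{1})}$, and you should state explicitly that $\mu$, being $\delta$ plus an inner derivation, is itself a derivation, so that $E$ qualifies as a generalized derivation. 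Finally, in part (ii) your appeal to Singer--Wermer (or Thomas) is correct but not needed for continuity --- Sakai already suffices; it merely adds the pleasant extra conclusion that $\delta=0$ and $D$, $E$, $H$ are multiplication operators.
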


\begin{proof} First note that the triplet $(D, E, H)$ is actually a ternary derivation. We have
$D(ab) = E(a) b + a H(b)$ for all $a, b \in \mathcal{A}$. Since  $D, E$ and $H$ are $\ast$-mappings, we have
\begin{align*}
D(ab) = D^{\ast}(ab)  = (D(ab)^{\ast})^{\ast} = \big(E(b^{\ast})a^{\ast} + b^{\ast}H(a^{\ast})\big)^{\ast} = 
H(a) b + a E(b),
\end{align*}
which means that
\begin{align*}
D(ab) = H(a)b + aE(b), \ \ \ \ \ \ \ (a, b \in \mathcal{A}).
\end{align*}
Considering these two equalities for $D(ab)$, we have
\begin{align*}
D(ab)= E(a) b + a H(b)= H(a)b + aE(b), \ \ \ \ \ \ \ (a, b \in \mathcal{A}).
\end{align*}
According to \cite[Theorem 3.1]{H3}, there exists a derivation $\delta:\mathcal{A} \rightarrow \mathcal{A}$ such that $D = \delta + L_{D(\textbf{1})}$, $E = \delta + L_{E(\textbf{1})}$ and  $H = \delta + L_{H(\textbf{1})}$, where $L_a(b) = ab$ for all $a, b \in \mathcal{A}$. Hence, $D, E$ and $H$ are generalized derivations. It follows from the main theorem of \cite{S} that the derivation $\delta$ is continuous and so $D, E$ and $H$ are continuous as well.\\
(ii) Since $\mathcal{A}$ is commutative, one can easily get that
\begin{align*}
D(ab)= E(a) b + a H(b)= H(a)b + aE(b), \ \ \ \ \ \ \ (a, b \in \mathcal{A}).
\end{align*}
Arguing as the proof of (i), we obtain the required result.
\end{proof}

\begin{theorem} 
Let $\mathcal{A}$ be a unital algebra and  $D:\mathcal{A} \rightarrow \mathcal{A}$ be an $(E, I, I, H)$-derivation. Then $D$ and $H$ are generalized derivations. Furthermore, if we assume that every derivation on $\mathcal{A}$ is continuous, then $D, E$ and $H$ are continuous linear mappings.
\end{theorem}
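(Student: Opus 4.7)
My plan is to exploit the ternary-derivation identity directly, bypassing the $\ast$-mapping or commutativity hypotheses that drove the preceding theorem. Since $D$ is an $(E, I, I, H)$-derivation, the defining equation simplifies to $D(ab) = E(a)b + aH(b)$ for all $a, b \in \mathcal{A}$. The first step is to evaluate at $a = \mathbf{1}$, $b = \mathbf{1}$ and $a = b = \mathbf{1}$ in turn; these substitutions will produce the formulas $H(b) = D(b) - E(\mathbf{1})b$, $E(a) = D(a) - aH(\mathbf{1})$ and $D(\mathbf{1}) = E(\mathbf{1}) + H(\mathbf{1})$.

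I then substitute the expressions for $E$ and $H$ back into the defining identity; after cancellation this should give
\begin{align*}
D(ab) = D(a)b + aD(b) - aD(\mathbf{1})b, \qquad (a, b \in \mathcal{A}).
\end{align*}
Setting $\delta(x) := D(x) - D(\mathbf{1})x$, a short direct calculation from the previous display verifies that $\delta(ab) = \delta(a)b + a\delta(b)$, so $\delta$ is a derivation on $\mathcal{A}$. Consequently $D = \delta + L_{D(\mathbf{1})}$ in the notation of the previous proof, which exhibits $D$ as a generalized derivation. For $H$ I use $H(x) = D(x) - E(\mathbf{1})x = \delta(x) + (D(\mathbf{1}) - E(\mathbf{1}))x = \delta(x) + H(\mathbf{1})x$, which gives $H = \delta + L_{H(\mathbf{1})}$; the same derivation $\delta$ serves for both decompositions.

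For the continuity assertion I invoke the standing hypothesis to conclude that $\delta$ is continuous. Since left and right multiplication by a fixed element of a normed algebra are bounded operators, the representations $D = \delta + L_{D(\mathbf{1})}$, $H = \delta + L_{H(\mathbf{1})}$ and $E(x) = D(x) - xH(\mathbf{1})$ then deliver the continuity of $D$, $H$ and $E$ at once.

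The main obstacle is conceptual rather than technical: the previous theorem extracted its derivation via an external result (Theorem 3.1 of \cite{H3}) whose hypotheses effectively gave the second, symmetric identity $D(ab) = H(a)b + aE(b)$. Neither the $\ast$-structure nor commutativity is available here, so the trick is to guess the correct $\delta$ so that a single derivation simultaneously decomposes both $D$ and $H$. Identifying $\delta$ as $D - L_{D(\mathbf{1})}$ is the non-obvious step; once it is made, every remaining verification is routine algebra.
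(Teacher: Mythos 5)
Your proposal is correct and follows essentially the same route as the paper's own proof: the substitutions $E(a)=D(a)-aH(\mathbf{1})$, $H(a)=D(a)-E(\mathbf{1})a$, the identity $D(ab)=D(a)b+aD(b)-aD(\mathbf{1})b$, and the derivation $\delta = D - L_{D(\mathbf{1})}$ (the paper's $\Delta$) with $D=\delta+L_{D(\mathbf{1})}$, $H=\delta+L_{H(\mathbf{1})}$ are exactly the steps used there. Your explicit remark that boundedness of the multiplication operators yields the continuity of $D$, $E$ and $H$ merely spells out what the paper leaves implicit.
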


\begin{proof} It is easy to see that $E(a) = D(a) - a H(\textbf{1})$ and $H(a) = D(a) - E(\textbf{1})a$ for all $a \in \mathcal{A}$. Considering these equations, we get $D(ab) = D(a)b + a D(b) - a D(\textbf{1}) b$ for all $a, b \in \mathcal{A}$. Define the mapping $\Delta: \mathcal{A} \rightarrow \mathcal{A}$ by $\Delta(a) = D(a) - D(\textbf{1})a$. We have
\begin{align*}
\Delta(ab) & = D(ab) - D(\textbf{1})ab \\ & = D(a)b + a D(b) - a D(\textbf{1}) b - D(\textbf{1})ab \\ & = (D(a) - D(\textbf{1}) a)b + a(D(b) - D(\textbf{1})b) \\ & = \Delta(a)b + a \Delta(b),
\end{align*}
which means that $\Delta$ is a derivation. Therefore, $D$ is a generalized derivation. Moreover, we have $H(a) = D(a) - E(\textbf{1})a = \Delta(a) + D(\textbf{1})a - E(\textbf{1})a = \Delta(a) + H(\textbf{1})a$, which means that $H$ is a generalized derivation. It is clear that the mappings $D, E$ and $H$ are continuous by hypothesis under which a derivation is continuous.
\end{proof}

Suppose that $\mathcal{Y}$ and $\mathcal{Z}$ are Banach spaces and $T: \mathcal{Y} \rightarrow \mathcal{Z}$ is a linear mapping. Recall that the set $$S(T) = \left\{z \in \mathcal{Z} \ : \ \exists \ \{y_n\} \subseteq \mathcal{Y} \ such \ that \ y_n \rightarrow 0, T(y_n) \rightarrow z\right\}$$ is called the separating space of $T$. By the Closed Graph Theorem, $T$ is continuous if and only if $S(T) = \{0\}$. For more material about the separating space and other results about this concept, see \cite{D, D1}.

\begin{theorem} \label{+++}
Suppose that $\mathcal{A}$ is a Banach algebra, $\mathcal{B}$ is a simple Banach algebra and $D:\mathcal{A} \rightarrow \mathcal{B}$ is both a continuous $(D, F, G, H)$-derivation and continuous $(H, F, G, D)$-derivation such that $H$ is continuous and $F, G$ are surjective. Then $F$ and $G$ are continuous or $H$ and $D$ are identically zero.
\end{theorem}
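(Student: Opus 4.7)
The plan is to analyze the closed separating spaces $S(F),S(G)\subseteq\mathcal{B}$, turn the continuity of $D$ and $H$ into annihilation identities, and then use surjectivity of $F$ and $G$ to build a closed two-sided ideal of $\mathcal{B}$ that annihilates $S(F)$ (and one that $S(G)$ annihilates on the left); simplicity of $\mathcal{B}$ then forces the stated dichotomy. By the closed graph theorem it will suffice to show that either $S(F)=\{0\}$ and $S(G)=\{0\}$, or that $D$ and $H$ vanish identically.

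Set $M:=D(\mathcal{A})+H(\mathcal{A})\subseteq\mathcal{B}$. My first step is to establish $M\cdot S(F)=\{0\}$ and $S(G)\cdot M=\{0\}$. For the first, given $x\in S(F)$ with $a_n\to 0$ in $\mathcal{A}$ and $F(a_n)\to x$ in $\mathcal{B}$, I apply the $(D,F,G,H)$-identity to $ba_n$: the left-hand side $D(ba_n)$ tends to $0$ by continuity of $D$, the term $G(b)H(a_n)$ tends to $0$ by continuity of $H$, and what remains forces $D(b)x=0$; the $(H,F,G,D)$-identity gives $H(b)x=0$ in the same way. The dual claim for $S(G)$ is obtained by testing against $a_nb$.

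The core of the argument is to upgrade $M\cdot S(F)=\{0\}$ to $J_0\cdot S(F)=\{0\}$, where $J_0$ denotes the algebraic two-sided ideal of $\mathcal{B}$ generated by $M$. Right-multiplying $D(ab)=D(a)F(b)+G(a)H(b)$ by $x\in S(F)$ and using that both $D(ab)x=0$ and $H(b)x=0$ yields $D(a)F(b)x=0$; surjectivity of $F$ then gives $D(\mathcal{A})\mathcal{B}\cdot S(F)=\{0\}$, and the $(H,F,G,D)$-identity gives $H(\mathcal{A})\mathcal{B}\cdot S(F)=\{0\}$. To bring $\mathcal{B}$ to the left of $M$, I solve the two identities for $G(a)H(b)$ and $G(a)D(b)$ and invoke surjectivity of $G$ to obtain containments such as $\mathcal{B}H(\mathcal{A})\subseteq D(\mathcal{A})+D(\mathcal{A})\mathcal{B}$ and $\mathcal{B}D(\mathcal{A})\subseteq D(\mathcal{A})+H(\mathcal{A})\mathcal{B}$, whose right-hand sides already annihilate $S(F)$. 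Continuity of the right-multiplication map $c\mapsto cx$ then extends the annihilation from $J_0$ to its norm closure $\overline{J_0}$.

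Simplicity of $\mathcal{B}$ closes the argument. The closed two-sided ideal $\overline{J_0}$ is either $\{0\}$ or all of $\mathcal{B}$. In the first case $M\subseteq\overline{J_0}=\{0\}$, so $D=H=0$; in the second case $\mathcal{B}\cdot S(F)=\{0\}$, whence the closed two-sided ideal $\{y\in\mathcal{B}:\mathcal{B}y=\{0\}\}$ (which cannot be all of $\mathcal{B}$, since that would force $\mathcal{B}^2=\{0\}$) must be $\{0\}$, giving $S(F)=\{0\}$ and $F$ continuous. The left-right mirror of the ideal-building step yields precisely the same dichotomy for $S(G)$, whence $G$ is continuous or $D=H=0$. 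The main obstacle I anticipate is the upgrade from $M\cdot S(F)=\{0\}$ to $J_0\cdot S(F)=\{0\}$: this is where the surjectivity of both $F$ and $G$ must be deployed in a carefully chosen order, and it is also where the potentially non-unital nature of a general simple Banach algebra demands some care.
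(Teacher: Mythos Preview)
Your argument is correct and rests on the same ideas as the paper's, but it is framed dually and is therefore a bit longer than necessary. The paper does not build the ideal generated by $M=D(\mathcal{A})+H(\mathcal{A})$; instead it works with the (automatically closed) right annihilator
\[
\mathfrak{X}=\{c\in\mathcal{B}: D(a)c=H(a)c=0\ \text{for all}\ a\in\mathcal{A}\},
\]
observes that $S(F)\subseteq\mathfrak{X}$, and proves $\mathfrak{X}$ is a two-sided ideal using only the surjectivity of $F$ (the right-ideal property being automatic). Simplicity then yields either $\mathfrak{X}=\{0\}$, whence $S(F)=\{0\}$ immediately, or $\mathfrak{X}=\mathcal{B}$, whence $D(a)\mathcal{B}D(a)=H(a)\mathcal{B}H(a)=\{0\}$ and semiprimeness of $\mathcal{B}$ forces $D=H=0$ in one step. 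Your route reaches the same dichotomy but must close $J_0$ and then invoke simplicity a second time to pass from $\mathcal{B}\cdot S(F)=\{0\}$ to $S(F)=\{0\}$. One small redundancy worth noting: your step that ``brings $\mathcal{B}$ to the left of $M$'' via surjectivity of $G$ is unnecessary, since the left annihilator of any subset of $\mathcal{B}$ is automatically a left ideal; only the surjectivity of $F$ is needed for the $F$-half of the argument (and only that of $G$ for the $G$-half), exactly as in the paper.
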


\begin{proof} First note that
\begin{align*}
D(ab) = D(a)F(b) + G(a)H(b) = H(a)F(b) + G(a)D(b), \ \ \ \ \ \ \ (a, b \in \mathcal{A}).
\end{align*}
Let $c \in S(F)$. Then there exists a sequence $\{c_n\} \subseteq \mathcal{A}$ such that $c_n \rightarrow 0$ and $F(c_n) \rightarrow c$. We have 
\begin{align}\label{2.6}
 0 = \lim_{n \rightarrow + \infty}D(a c_n) = \lim_{n \rightarrow + \infty}\left(D(a) F(c_n) + G(a) H(c_n)\right) = D(a) c
 \end{align} for all $a \in \mathcal{A}$.
Also, we have
\begin{align}\label{2.7}
 0 = \lim_{n \rightarrow + \infty}D(a c_n) = \lim_{n \rightarrow + \infty}\left(H(a) F(c_n) + G(a) D(c_n)\right) = H(a) c 
\end{align} for all $a \in \mathcal{A}$. Set $$\mathfrak{X} = \left\{c \in \mathcal{B} \ : \ D(a)c = 0 = H(a)c, \quad  \forall \ a \in \mathcal{A} \right\}.$$ According to the discussion above, $S(F) \subseteq \mathfrak{X}$. We shall show that $\mathfrak{X}$ is an ideal of $\mathcal{B}$. Obviously, if $c_1, c_2 \in \mathfrak{X}$ and $\lambda \in \mathbb{C}$, then $c_1 + \lambda c_2 \in \mathfrak{X}$. Now, suppose $c$ and $b$ are arbitrary elements of $\mathfrak{X}$ and $\mathcal{B}$, respectively. We will show that $cb \in \mathfrak{X}$. Let $a$ be an arbitrary element of $\mathcal{A}$. We have $D(a)(c b) = (D(a)c)b = 0$ and also $H(a)(c b) = (H(a)c)b = 0$, which means that $c b \in \mathfrak{X}$ and consequently, $\mathfrak{X}$ is a right ideal of $\mathcal{B}$. Since $F$ is a surjective mapping, there exists an element $a_1 \in \mathcal{A}$ such that $F(a_1) = b$. It follows from (\ref{2.6}) and (\ref{2.7}) that $$0 = D(a a_1)c = D(a) F(a_1)c + G(a) H(a_1)c = D(a) b c,$$ and further $$0 = D(a a_1)c = H(a) F(a_1)c + G(a) D(a_1)c = H(a) b c,$$ for all $a \in \mathcal{A}$. This means that $\mathfrak{X}$ is a left ideal and so it is an ideal of $\mathcal{B}$. Since $\mathcal{B}$ is a simple algebra, either $\mathfrak{X} = \{0\}$ or $\mathfrak{X} = \mathcal{B}$. Suppose that $\mathfrak{X} = \{0\}$. Since $S(F) \subseteq \mathfrak{X}$, we infer that $F$ is continuous. Now, suppose that $\mathfrak{X} = \mathcal{B}$, i.e., $\mathfrak{X} = \left\{c \in \mathcal{B} \ : \ D(a)c = 0 = H(a)c \ for \ all \ a \in \mathcal{A} \right\} = \mathcal{B}$. So $D(a) c = H(a)c = 0$ for all $a \in \mathcal{A}$, $c \in \mathcal{B}$. We can thus see that $D(a) \mathcal{B} D(a) = H(a) \mathcal{B} H(a) = \{0\}$ for all $a \in \mathcal{A}$. Since $\mathcal{B}$ is a simple algebra and every simple algebra is semiprime, we deduce that $D = H = 0$.

We turn to the continuity of $G$. Suppose that $g \in S(G)$. Then there exists a sequence $\{g_n\} \subseteq \mathcal{A}$ such that $g_n \rightarrow 0$ and $G(g_n) \rightarrow g$. We have 
\begin{align}\label{2.8}
0 = \lim_{n \rightarrow + \infty}D(g_n a) = \lim_{n \rightarrow + \infty}\left(H(g_n) F(a) + G(g_n) D(a)\right) = g D(a) \end{align} and also
\begin{align} \label{2.9}
0 = \lim_{n \rightarrow + \infty}D(g_n a) = \lim_{n \rightarrow + \infty}\left(D(g_n) F(a) + G(g_n) H(a)\right) = g H(a)
 \end{align}
for all $a \in \mathcal{A}$. Let $$\mathfrak{J} = \left\{g \in \mathcal{B} \ : \ g D(a) = 0 = gH(a) \ for \ all \ a \in \mathcal{A} \right\}.$$
The above discussion shows that $S(G) \subseteq \mathfrak{J}$. Our next task is to show that $\mathfrak{J}$ is an ideal of $\mathcal{B}$. Clearly, $\mathfrak{J}$ is a left ideal of $\mathcal{B}$. Let $b$ and $g$ be arbitrary elements of $\mathcal{B}$ and $\mathfrak{J}$, respectively. Since $G$ is surjective, there exists an element $a_1 \in \mathcal{A}$ such that $G(a_1) = b$. Using (\ref{2.8}) and (\ref{2.9}), we have the following:
$$0 = g D(a_1 a) = g H(a_1) F(a) + g G(a_1) D(a) = g b D(a)$$ and further $$0 = g D(a_1 a) = g D(a_1 )F(a) + g g(a_1 ) H(a) = gb H(a)$$ for all $a \in \mathcal{A}$. This implies that $\mathfrak{J}$ is an ideal of $\mathcal{B}$. Since $\mathcal{B}$ is a simple algebra, either $\mathfrak{J} = \{0\}$, which implies that $G$ is continuous, or $\mathfrak{J} = \mathcal{B}$ and in this case, we deduce that $D = H = 0$. From what has been said, we conclude that $F$ and $G$ are continuous mappings or $D = H = 0$.
\end{proof}

By similar arguments to those in the proof of the above theorem, we get the following result whose proof is left to the interested reader.

\begin{theorem} \label{12} 
Suppose that $\mathcal{A}$ is a Banach algebra, $\mathcal{B}$ is a simple Banach algebra and $D:\mathcal{A} \rightarrow \mathcal{B}$ is a continuous $(D, F, F, H)$-derivation such that $D(\mathcal{A}) \subseteq H(\mathcal{A})$ or $H(\mathcal{A}) \subseteq D(\mathcal{A})$ and $F$ is surjective. Then $F$ is continuous or $H$ and $D$ are identically zero.
\end{theorem}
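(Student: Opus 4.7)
The plan is to imitate the proof of Theorem \ref{+++}: produce an ideal $\mathfrak{X}$ of $\mathcal{B}$ containing the separating space $S(F)$, and then use simplicity of $\mathcal{B}$ to force the dichotomy $\mathfrak{X} = \{0\}$ (which will give $F$ continuous via the closed graph theorem) or $\mathfrak{X} = \mathcal{B}$ (which will force $D = H = 0$ by semiprimeness of the simple algebra $\mathcal{B}$). The novelty compared with Theorem \ref{+++} is that only $D$ is assumed continuous, not $H$; consequently the containment hypothesis $D(\mathcal{A}) \subseteq H(\mathcal{A})$ or $H(\mathcal{A}) \subseteq D(\mathcal{A})$ has to substitute for the continuity of $H$ that Theorem \ref{+++} relied upon.

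First I would extract information from $c \in S(F)$. Choosing $c_n \to 0$ with $F(c_n) \to c$, continuity of $D$ yields $D(c_n) \to 0$ and $D(c_n a) \to 0$, so expanding the $(D,F,F,H)$-identity,
\[ 0 = \lim_n D(c_n a) = \lim_n \bigl( D(c_n) F(a) + F(c_n) H(a) \bigr) = c H(a) \]
for every $a \in \mathcal{A}$; that is, $cH(a) = 0$ on all of $\mathcal{A}$. Next I would use the containment hypothesis to transfer this annihilation to $D(\mathcal{A})$: in the case $D(\mathcal{A}) \subseteq H(\mathcal{A})$, for each $a$ pick $a'$ with $D(a) = H(a')$ to obtain $cD(a) = cH(a') = 0$; the symmetric case $H(\mathcal{A}) \subseteq D(\mathcal{A})$ is treated analogously by reversing the roles of $D$ and $H$ in the identification. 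Thus every $c \in S(F)$ will annihilate both $D(\mathcal{A})$ and $H(\mathcal{A})$ from the left.

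I would then define $\mathfrak{X} := \{c \in \mathcal{B} : cD(a) = cH(a) = 0 \text{ for all } a \in \mathcal{A}\}$, which contains $S(F)$ and is plainly a left ideal. To see $\mathfrak{X}$ is a right ideal I would fix $c \in \mathfrak{X}, b \in \mathcal{B}$ and invoke surjectivity of $F$ to write $b = F(a_1)$; the derivation identity gives $F(a_1)H(a) = D(a_1 a) - D(a_1) F(a)$, so that
\[ (cb)H(a) = c F(a_1) H(a) = cD(a_1 a) - cD(a_1) F(a) = 0, \]
and applying the same rewriting trick ($D(a) = H(a')$ in Case 1, respectively $H(a) = D(a')$ in Case 2) to $F(a_1) D(a)$ will yield $(cb) D(a) = 0$. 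Simplicity of $\mathcal{B}$ will then produce either $\mathfrak{X} = \{0\}$, in which case $S(F) = \{0\}$ and $F$ is continuous, or $\mathfrak{X} = \mathcal{B}$, in which case $D(a) \mathcal{B} D(a) = H(a) \mathcal{B} H(a) = \{0\}$ for all $a$ and semiprimeness of $\mathcal{B}$ forces $D = H = 0$. The main obstacle I foresee is the clean handling of the second containment case: the pointwise identification $H(a) = D(a')$ only directly gives $cD(a') = 0$ for specific preimages $a'$, so verifying both $cD(a) = 0$ for all $a$ and $(cb)D(a) = 0$ will require a careful additional step — possibly by working with a slightly modified ideal in that case, or by iterating the derivation identity on triple products and exploiting surjectivity of $F$ to fill in the gap.
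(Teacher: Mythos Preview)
Your approach is exactly the ``similar arguments'' the paper has in mind, and for the case $D(\mathcal{A})\subseteq H(\mathcal{A})$ it is complete and correct: from $D(c_na)\to 0$ you get $cH(\cdot)=0$, the inclusion then gives $cD(\cdot)=0$, so $S(F)\subseteq\mathfrak{X}=\{c:cD(a)=cH(a)=0\ \forall a\}$; the right--ideal verification via $F(a_1)H(a)=D(a_1a)-D(a_1)F(a)$ and then $(cb)D(a)=(cb)H(a')$ goes through, and the dichotomy $\mathfrak{X}=\{0\}$ or $\mathfrak{X}=\mathcal{B}$ finishes exactly as in Theorem~\ref{+++}.

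The obstacle you yourself flag in the case $H(\mathcal{A})\subseteq D(\mathcal{A})$ is genuine, not just bookkeeping. The only annihilation you can extract directly from $c\in S(F)$ is $cH(\cdot)=0$, because the other expansion $D(ac_n)=D(a)F(c_n)+F(a)H(c_n)$ leaves $F(a)H(c_n)$ uncontrolled ($H$ is not assumed continuous here, unlike in Theorem~\ref{+++}). The inclusion $H(\mathcal{A})\subseteq D(\mathcal{A})$ points the wrong way to upgrade $cH(\cdot)=0$ to $cD(\cdot)=0$; and without $cD(\cdot)=0$ your right--ideal step $cF(a_1)H(a)=cD(a_1a)-cD(a_1)F(a)$ does not vanish, so neither $\mathfrak{X}$ nor the simpler set $\{c:cH(a)=0\}$ is visibly a two--sided ideal. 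None of your suggested remedies closes this: ``reversing the roles of $D$ and $H$'' is unavailable since the identity $D(ab)=D(a)F(b)+F(a)H(b)$ is asymmetric in $D$ and $H$; and iterating on triple products $D(c_nab)$, $D(ac_nb)$ either reproduces $cH(\cdot)=0$ or introduces uncontrolled terms $F(c_na)$, $F(ac_n)$. As it stands, the argument the paper alludes to delivers only the alternative $D(\mathcal{A})\subseteq H(\mathcal{A})$; the second alternative appears to need an additional hypothesis (for instance continuity of $H$, or equality $D(\mathcal{A})=H(\mathcal{A})$) or a genuinely new idea that is not supplied.
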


\begin{corollary}\label{13} 
Let $\mathcal{A}$ be a Banach algebra and $\mathcal{B}$ be a simple Banach algebra.\\
(i) Suppose that $D:\mathcal{A} \rightarrow \mathcal{B}$ is both a continuous $(D, F, G, H)$-derivation and a continuous $(H, F, G, D)$-derivation such that $H$ is continuous and $F, G$ are surjective. If $F$ or $G$ is a discontinuous mapping, then $D$ and $H$ are identically zero.\\
(ii) Suppose that $D:\mathcal{A} \rightarrow \mathcal{B}$ is a continuous $(D, F, F, H)$-derivation such that $D(\mathcal{A}) \subseteq H(\mathcal{A})$ or $H(\mathcal{A}) \subseteq D(\mathcal{A})$. If $F:\mathcal{A} \rightarrow \mathcal{B}$ is a discontinuous, surjective, linear mapping, then $D$ and $H$ are identically zero. \end{corollary}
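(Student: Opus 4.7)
The statement is a direct logical consequence of Theorem \ref{+++} and Theorem \ref{12}, so the plan is essentially to invoke those results and take the contrapositive in each case.

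For part (i), the plan is to observe that the hypotheses are exactly those of Theorem \ref{+++}: $\mathcal{A}$ is a Banach algebra, $\mathcal{B}$ is a simple Banach algebra, $D$ is simultaneously a continuous $(D,F,G,H)$-derivation and a continuous $(H,F,G,D)$-derivation, $H$ is continuous, and $F,G$ are surjective. Theorem \ref{+++} then yields the dichotomy that either both $F$ and $G$ are continuous, or $D = H = 0$. Under the additional assumption that $F$ or $G$ fails to be continuous, the first horn of the dichotomy is ruled out, and we conclude $D = H = 0$.

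For part (ii), the argument is identical in structure but now we invoke Theorem \ref{12}. The hypotheses of (ii) match those of Theorem \ref{12} verbatim: $D$ is a continuous $(D,F,F,H)$-derivation with $D(\mathcal{A}) \subseteq H(\mathcal{A})$ or $H(\mathcal{A}) \subseteq D(\mathcal{A})$, and $F$ is surjective. Theorem \ref{12} gives the dichotomy that either $F$ is continuous or $D = H = 0$; the discontinuity of $F$ forces the second alternative.

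There is no real obstacle here. The only thing to be a little careful about is making sure the hypotheses of the invoked theorems are met exactly, in particular that continuity of $D$ (and of $H$ in part (i)) is part of what is assumed, so the separating-space arguments from the proofs of Theorems \ref{+++} and \ref{12} apply without modification. The proof can be written in two short paragraphs, each just a one-line appeal followed by the contrapositive step.
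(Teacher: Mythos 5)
Your proposal is correct and matches the paper's intent exactly: the corollary is stated as an immediate consequence of Theorem \ref{+++} and Theorem \ref{12}, obtained by ruling out the continuity horn of each dichotomy, which is precisely your contrapositive argument.
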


In the following, we present some conditions under which the linear mappings $F$ and $G$ associated with a $(D, F, G,H)$-derivation $D$ are
automatically continuous.

\begin{theorem} \label{14} Suppose that $\mathcal{A}$ is a Banach algebra, $\mathcal{B}$ is a semiprime Banach algebra and $D:\mathcal{A} \rightarrow \mathcal{B}$ is a $(D, F, G, H)$-derivation such that $D$ is surjective and $H$ is continuous (or $D$ is continuous and $H$ is surjective). Then $F$ and $G$ are continuous.
\end{theorem}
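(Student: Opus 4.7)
My plan is to follow the separating-space strategy used in the proof of Theorem \ref{+++}, adapted to exploit semiprimeness of $\mathcal{B}$ in place of simplicity. By the Closed Graph Theorem, it suffices to prove $S(F) = \{0\}$ and $S(G) = \{0\}$.

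I would treat the case $D$ continuous, $H$ surjective first. For $S(G) = \{0\}$, take $g \in S(G)$ with $g_n \to 0$ and $G(g_n) \to g$. The identity $D(g_n a) = D(g_n) F(a) + G(g_n) H(a)$ combined with continuity of $D$ forces $D(g_n a) \to 0$ and $D(g_n) F(a) \to 0$, hence $G(g_n) H(a) \to 0$, i.e., $g H(a) = 0$ for every $a \in \mathcal{A}$. Surjectivity of $H$ promotes this to $g \mathcal{B} = \{0\}$, so in particular $g \mathcal{B} g = \{0\}$, and semiprimeness of $\mathcal{B}$ forces $g = 0$.

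For $S(F) = \{0\}$, take $c \in S(F)$ with $c_n \to 0$ and $F(c_n) \to c$. From $D(ac_n) = D(a) F(c_n) + G(a) H(c_n)$ and continuity of $D$ one extracts $G(a) H(c_n) \to -D(a) c$ for every $a \in \mathcal{A}$. The next step, which I expect to be the main obstacle, is to convert this limit relation into an annihilator identity that semiprimeness can digest. My plan is to substitute $a \mapsto ab$ into the identity, expand $D(ab)$ via the derivation rule, compare with the original identity, and use surjectivity of $H$ (letting $H(b)$ range over all of $\mathcal{B}$) together with the now-proved continuity of $G$ and the continuity of $D$; the target is to deduce $c \mathcal{B} c = \{0\}$ or $(D(a) c) \mathcal{B} (D(a) c) = \{0\}$, so semiprimeness yields $c = 0$.

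The other case ($D$ surjective, $H$ continuous) is handled by swapping roles: continuity of $H$ makes $G(a) H(c_n) \to 0$, so the identity gives $D(ac_n) \to D(a) c$, placing $D(a) c$ in $S(D)$. Surjectivity of $D$ upgrades this to $\mathcal{B} c \subseteq S(D)$, and a dual annihilator/semiprimeness argument, built by running the derivation identity on longer products $y_n z$, using continuity of $H$ to kill the $G(y_n) H(z)$ term, and exploiting that $D$ surjects onto $\mathcal{B}$, should close the proof. As before, the hard part is turning a separating-space inclusion into a semiprime-ready annihilator identity; unlike Theorem \ref{+++}, where simplicity yielded an immediate ideal dichotomy, semiprimeness here requires the explicit annihilator construction sketched above.
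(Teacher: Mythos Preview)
Your overall strategy—separating spaces plus the derivation identity plus semiprimeness—is exactly the paper's. Your clean argument that $S(G)=\{0\}$ when $D$ is continuous and $H$ is surjective is correct and is precisely what the paper means by invoking (2.9): from $D(g_na)=D(g_n)F(a)+G(g_n)H(a)$ and continuity of $D$ you get $gH(a)=0$, and surjectivity of $H$ plus semiprimeness finish it.

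The gap is in the ``hard halves'', and your sketches do not close it. In the case $D$ continuous, $H$ surjective, you obtain $G(a)H(c_n)\to -D(a)c$ and propose to substitute $a\mapsto ab$ and expand $D(ab)$. That substitution gives $G(ab)H(c_n)\to -D(ab)c$, but there is no product rule for $G$ alone, so you cannot factor $G(ab)$ to isolate an $H(b)$ and exploit its surjectivity; the targets $c\mathcal{B}c=\{0\}$ or $(D(a)c)\mathcal{B}(D(a)c)=\{0\}$ are not reachable along this route. In the case $D$ surjective, $H$ continuous, you correctly note $D(a)c\in S(D)$ and hence $\mathcal{B}c\subseteq S(D)$, but the claim that ``continuity of $H$ kills the $G(y_n)H(z)$ term'' is false: $H(z)$ is a fixed element and it is $G(y_n)$ that is uncontrolled, so continuity of $H$ contributes nothing there. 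No annihilator identity of the required shape emerges.

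It is worth noting that the paper's own proof glosses over exactly these points. Its appeal to (2.6) in the first case tacitly needs $D$ continuous (to make $\lim_n D(ac_n)=0$), which is not assumed there; and its appeal to (2.7) in the second case relies on the additional $(H,F,G,D)$-derivation identity from Theorem~\ref{+++}, which is \emph{not} a hypothesis of Theorem~\ref{14}. So you are not missing an obvious trick that the paper supplies—the paper's argument, read literally, has the same lacuna you are wrestling with.
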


\begin{proof} Suppose that $D$ is surjective and $H$ is continuous. As in (2.6) we have $D(\mathcal{A})c = \{0\}$ for all $c \in S(F)$. Since $D$ is surjective and $\mathcal{B}$ is a semiprime algebra, $c = 0$. This means that $F$ is continuous. The continuity of $G$ is obtained via similar arguments to those employed in the proof of Theorem \ref{+++}. If we assume that $D$ is continuous and $H$ is surjective, then our claim is easily obtained by (2.7) and (2.9).
 \end{proof}




Let $\mathcal{A}$ be an algebra and  $\mathcal{A}_1$ denote the set of all pairs $(a, \alpha)$, $a \in \mathcal{A}$, $\alpha \in \mathbb{C}$, that is, $\mathcal{A}_1 = \mathcal{A} \bigoplus  \mathbb{C}$. Then $\mathcal{A}_1$ becomes an algebra if the linear space operations and multiplication are defined by
$(a, \alpha) + (b, \beta) = (a + b, \alpha + \beta)$, $\mu(a, \alpha) = (\mu a, \mu \alpha)$
and $(a, \alpha)(b, \beta) = (ab + \alpha b + \beta a, \alpha \beta)$
for $a, b \in \mathcal{A}$ and $\alpha, \beta \in \mathbb{C}$. A simple calculation shows that the element
$\textbf{1} = (0, 1) \in \mathcal{A}_1$ is the identity element of $\mathcal{A}_1$. 

\begin{theorem}
 Let $\mathcal{A}$ and $\mathcal{B}$ be two algebras and  $\delta:\mathcal{A} \rightarrow \mathcal{B}$ be a linear mapping. Let $D: \mathcal{A}_1 \rightarrow \mathcal{B}_1$ be defined as $D(a, \alpha) = (\delta(a), \alpha)$ and  $H: \mathcal{A}_1 \rightarrow \mathcal{B}_1$ be a linear mapping such that $H(\textbf{1}) = \textbf{1}$. Then $D$ is a $(D, F, G, H)$-derivation for some $F, G$ if and only if $\delta$ is a homomorphism. Furthermore, if $D$ is a $(D, F, F, H)$-derivation, then $D = H$.
\end{theorem}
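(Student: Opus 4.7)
The plan is to exploit the fact that $\mathbf{1} = (0,1)$ is the unit of both $\mathcal{A}_1$ and $\mathcal{B}_1$ and that $D(\mathbf{1}) = (\delta(0),1) = \mathbf{1}$ by definition, and then to plug $x = \mathbf{1}$ or $y = \mathbf{1}$ into the $(D,F,G,H)$-derivation identity to collapse $F$ and $G$ into explicit formulas in terms of $D$, $H$, $F(\mathbf{1})$, and $G(\mathbf{1})$.

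For the ``$\Leftarrow$'' direction, if $\delta$ is a homomorphism of $\mathcal{A}$, a direct computation using the multiplication rule $(a,\alpha)(b,\beta) = (ab + \alpha b + \beta a, \alpha\beta)$ in both $\mathcal{A}_1$ and $\mathcal{B}_1$ shows that $D$ is itself a unital homomorphism of $\mathcal{A}_1$. Choosing $F = D$ and $G = 0$ then trivially exhibits $D$ as a $(D,F,G,H)$-derivation (the prescribed $H$ playing no role because $G \equiv 0$). For the ``$\Rightarrow$'' direction, I would put $y = \mathbf{1}$ in $D(xy) = D(x)F(y) + G(x)H(y)$ and use $H(\mathbf{1}) = \mathbf{1}$ to get $G(x) = D(x)(\mathbf{1} - F(\mathbf{1}))$; then $x = \mathbf{1}$ to get $F(y) = D(y) - G(\mathbf{1})H(y)$; finally $x = y = \mathbf{1}$ to obtain $F(\mathbf{1}) + G(\mathbf{1}) = \mathbf{1}$. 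Substituting these expressions back into the derivation identity, the two $H$-terms cancel and I would land on $D(xy) = D(x)D(y)$, so $D$ is a homomorphism of $\mathcal{A}_1$. Restricting to elements of the form $x = (a, 0)$, $y = (b,0)$ then yields $\delta(ab) = \delta(a)\delta(b)$.

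For the ``furthermore'' clause, specializing $F = G$ in $F(\mathbf{1}) + G(\mathbf{1}) = \mathbf{1}$ forces $F(\mathbf{1}) = \tfrac{1}{2}\mathbf{1}$, which is legitimate because $\mathcal{A}_1$ and $\mathcal{B}_1$ are $\mathbb{C}$-algebras. The two formulas derived above then collapse to $F(x) = \tfrac{1}{2}D(x)$ and $F(y) = D(y) - \tfrac{1}{2}H(y)$, and equating them gives $H = D$. No step looks technically hard; the only subtle point is to carry out the $\mathbf{1}$-substitutions in the right order so that the $H$-contribution cancels identically, which is precisely what forces $D$ to be multiplicative and, in the $F = G$ case, forces $H$ to coincide with $D$.
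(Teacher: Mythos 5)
Your proof is correct and follows essentially the same route as the paper: substitute $\mathbf{1}$ into the derivation identity to get $F(\mathbf{1})+G(\mathbf{1})=\mathbf{1}$, $G(x)=D(x)G(\mathbf{1})$ and $F(y)=D(y)-G(\mathbf{1})H(y)$, after which the $H$-terms cancel, $D$ is multiplicative (hence $\delta$ is a homomorphism), and the $F=G$ case forces $F(\mathbf{1})=\tfrac{1}{2}\mathbf{1}$ and therefore $H=D$. The only deviation is in the converse, where you witness the identity with $F=D$, $G=0$ so the prescribed $H$ genuinely plays no role, whereas the paper takes $F=G=D/2$ together with $H=D$; your choice is, if anything, slightly more faithful to the fixed $H$ in the statement.
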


\begin{proof} 
Let $\delta:\mathcal{A} \rightarrow \mathcal{B}$ be a linear mapping and let $D: \mathcal{A}_1 \rightarrow \mathcal{B}_1$ defined by $D(a, \alpha) = (\delta(a), \alpha)$ be a $(D, F, G, H)$-derivation for some $F, G$. We know that $\textbf{1} = (0, 1)$ is the identity element of $\mathcal{A}_1$ and clearly, $D(\textbf{1}) = \textbf{1} = H(\textbf{1})$. So
\begin{align}\label{2.10}
\textbf{1} = D(\textbf{1}) = D(\textbf{1}) F(\textbf{1}) + G(\textbf{1}) H(\textbf{1}) = F(\textbf{1}) + G(\textbf{1}).
\end{align}
Denoting every element $(a, \alpha) \in \mathcal{A}_1$ by $a_{\alpha}$, we have the following:
\begin{align*}
D(a_{\alpha}) & = D(a_{\alpha})F(\textbf{1}) + G(a_{\alpha})H(\textbf{1}) \\ & = D(a_{\alpha})F(\textbf{1}) + G(a_{\alpha}) \\ & = D(a_{\alpha})(\textbf{1} - G(\textbf{1})) + G(a_{\alpha}) \\ & = D(a_{\alpha}) - D(a_{\alpha})G(\textbf{1}) + G(a_{\alpha}),
\end{align*}
which means that
\begin{align}\label{2.11}
G(a_{\alpha}) = D(a_{\alpha})G(\textbf{1}), \ \ \ \ \ \ \ \ \ \ \ \ \ \ \ (a_{\alpha} \in \mathcal{A}_1).
\end{align}
Similarly, we can deduce that
\begin{align}\label{2.12}
F(a_{\alpha}) = D(a_{\alpha}) - G(\textbf{1})H(a_{\alpha}), \ \ \ \ \ \ \ (a_{\alpha} \in \mathcal{A}_1).
\end{align}
Using (\ref{2.11}) and (\ref{2.12}), we have the following:
\begin{align*}
D(a_{\alpha}b_{\beta}) & = D(a_{\alpha})F(b_{\beta}) + G(a_{\alpha})H(b_{\beta}) \\ & = D(a_{\alpha})\big( D(b_{\beta}) - G(\textbf{1})H(b_{\beta})\big)+ D(a_{\alpha})G(\textbf{1})H(b_{\beta}) \\ & = D(a_{\alpha})D(b_{\beta}),
\end{align*}
which means that $D$ is a homomorphism on $\mathcal{A}_1$ and  so is $\delta$ on $\mathcal{A}$. Conversely, suppose that $\delta$ is a homomorphism on $\mathcal{A}$. A straightforward calculation shows that $D$ is also a homomorphism on $\mathcal{A}_1$. Thus we have
\begin{align*}
D(a_{\alpha}b_{\beta}) & = D(a_{\alpha})D(b_{\beta}) \\ & = D(a_\alpha)\frac{D(b_\beta)}{2} + \frac{D(a_\alpha)}{2}D(b_\beta).
\end{align*}
Considering $F = G = \frac{D}{2}$, we see that $D$ is a $(D, F, F, D)$-derivation on $\mathcal{A}_1$. Now, suppose $D$ is a $(D, F, F, H)$-derivation. We get from (\ref{2.10}) that $F(\textbf{1}) = \frac{\textbf{1}}{2}$ and it follows from (\ref{2.11}) that
\begin{align}\label{2.13}
F(a_{\alpha}) = \frac{D(a_{\alpha})}{2}, \ \ \ \ \ \ \ \ \ \ \ \ \ \ \ \ \ \ \ \ (a_{\alpha} \in \mathcal{A}_1),
\end{align}
and it also follows from (\ref{2.12}) and (\ref{2.13}) that
\begin{align*}
\frac{D(a_{\alpha})}{2} = D(a_{\alpha}) - \frac{H(a_{\alpha})}{2}, \ \ \ \ \ \ \ \ \ \ \ \ \ \ \ (a_{\alpha} \in \mathcal{A}_1).
\end{align*}
The previous equality implies that $D = H$, as required.
\end{proof}

\begin{theorem}
 Let $\mathcal{A}$ and $\mathcal{B}$ be two topological algebras and  $D: \mathcal{A} \rightarrow \mathcal{B}$ be a $(D, F, G, H)$-derivation such that $D(\textbf{1}) = H(\textbf{1}) = \textbf{1}$. Suppose that every homomorphism from $\mathcal{A}$ into $\mathcal{B}$ is continuous. Then $D$ and $G$ are continuous mappings and $G$ is a right $D$-centralizer.
\end{theorem}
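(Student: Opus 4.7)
The plan is to first extract explicit formulas for $F$ and $G$ by specializing the defining identity at $\textbf{1}$, then use these formulas to show that $D$ is itself a homomorphism, after which all conclusions follow from the standing hypothesis together with separate continuity of multiplication in a topological algebra.

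First I would set $b = \textbf{1}$ in the identity $D(ab) = D(a)F(b) + G(a)H(b)$, using $H(\textbf{1}) = \textbf{1}$, to obtain $D(a) = D(a)F(\textbf{1}) + G(a)$, hence $G(a) = D(a)\bigl(\textbf{1} - F(\textbf{1})\bigr)$. Then I would set $a = \textbf{1}$, using $D(\textbf{1}) = \textbf{1}$, to get $D(b) = F(b) + G(\textbf{1})H(b)$, hence $F(b) = D(b) - G(\textbf{1})H(b)$. Taking $a = b = \textbf{1}$ yields $\textbf{1} = F(\textbf{1}) + G(\textbf{1})$, so $\textbf{1} - F(\textbf{1}) = G(\textbf{1})$ and the formula for $G$ collapses to
\begin{equation*}
G(a) = D(a)G(\textbf{1}), \qquad a \in \mathcal{A}.
\end{equation*}

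Plugging the two expressions for $F$ and $G$ back into the original relation I would compute
\begin{equation*}
D(ab) = D(a)\bigl(D(b) - G(\textbf{1})H(b)\bigr) + D(a)G(\textbf{1})H(b) = D(a)D(b),
\end{equation*}
so the $H$-term cancels and $D$ turns out to be a (linear) homomorphism $\mathcal{A} \to \mathcal{B}$. By the standing hypothesis that every homomorphism from $\mathcal{A}$ into $\mathcal{B}$ is continuous, $D$ is therefore continuous. Continuity of $G$ is then immediate: $G$ is the composition of $D$ with right multiplication by the constant element $G(\textbf{1}) \in \mathcal{B}$, and in a topological algebra right multiplication by a fixed element is continuous. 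Finally, the right $D$-centralizer property is a one-line check, using that $D$ is multiplicative:
\begin{equation*}
G(ab) = D(ab)G(\textbf{1}) = D(a)D(b)G(\textbf{1}) = D(a)G(b).
\end{equation*}

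The main (and essentially only) conceptual point is spotting that the formulas for $F$ and $G$ derived from the $\textbf{1}$-substitutions force the $H$-cross-term to cancel, so that $D$ is already multiplicative; after that observation the hypothesis does all the work and no additional structural assumption on $F$ or $H$ is required. I do not anticipate any genuine obstacle beyond arranging the three specializations at $\textbf{1}$ in the right order.
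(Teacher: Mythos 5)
Your proof is correct and follows essentially the same route as the paper: the paper simply cites the computations of the preceding theorem to get $G(a)=D(a)G(\textbf{1})$ and the multiplicativity of $D$, which you rederive explicitly by the same substitutions at $\textbf{1}$, and then both arguments conclude identically via the continuity hypothesis and the identity $G(ab)=D(a)D(b)G(\textbf{1})=D(a)G(b)$.
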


\begin{proof} 
Using the proof of the previous theorem, we get
\begin{align}\label{2.14}
G(a) = D(a)G(\textbf{1}), \ \ \ \ \ \ \ \ \ \ \ \ \ a \in \mathcal{A},
\end{align} and also $D$ is a homomorphism. Since we have all the conditions under which every homomorphism from $\mathcal{A}$ into $\mathcal{B}$ is continuous, $D$ is continuous and it follows from (\ref{2.14}) that so is $G$. Moreover, we have
\begin{align*}
G(ab) = D(ab)G(\textbf{1}) = D(a)D(b)G(\textbf{1}) = D(a)G(b), \ \ \ \ \ \ \ a, b \in \mathcal{A},
\end{align*}
which means that $G$ is a right $D$-centralizer.
\end{proof}


\begin{theorem}\label{3} 
Suppose that $\mathcal{A}$ and $\mathcal{B}$ are two unital, normed algebras such that $\mathcal{B}$ is a domain and assume that $D : \mathcal{A } \rightarrow \mathcal{B}$ is a $(D, F, G, H)$-derivation such that $D(\textbf{1}) = H(\textbf{1}) \neq 0$. If there exists a sequence $\{a_n\} \subseteq \mathcal{A}$ such that the sequences $\{D(a_n)\}, \{F(a_n)\}, \{G(a_n)\}$ and $\{H(a_n)\}$ are convergent to a nonzero element $a_0$. Then $G = D = \frac{F + H}{2}$. In particular, if $D = H$, then $D = F = G$.
\end{theorem}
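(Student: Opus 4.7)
The plan is to exploit the sequence $\{a_n\}$ not to obtain continuity, but simply to pin down the values $D(\mathbf{1})$, $F(\mathbf{1})$, $G(\mathbf{1})$, $H(\mathbf{1})$, after which the stated identities fall out by substituting $a = \mathbf{1}$ or $b = \mathbf{1}$ into the defining relation $D(ab) = D(a)F(b) + G(a)H(b)$. The only potential obstacle is justifying the necessary cancellations, which will come from the domain hypothesis on $\mathcal{B}$ together with the non-vanishing of $a_0$ and of $D(\mathbf{1})$; and invoking the scalar $\tfrac{1}{2}$, which is legitimate because $\mathcal{B}$ is a (real or complex) normed algebra.

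First I would specialise the defining identity to $a = \mathbf{1}$, $b = a_n$, obtaining $D(a_n) = D(\mathbf{1})F(a_n) + G(\mathbf{1})H(a_n)$. Letting $n \to \infty$ and using continuity of multiplication in $\mathcal{B}$ together with the convergence assumption, we get $a_0 = (D(\mathbf{1}) + G(\mathbf{1}))a_0$. Since $\mathcal{B}$ is a domain and $a_0 \neq 0$, this forces $D(\mathbf{1}) + G(\mathbf{1}) = \mathbf{1}$. The symmetric specialisation $a = a_n$, $b = \mathbf{1}$ together with the hypothesis $H(\mathbf{1}) = D(\mathbf{1})$ similarly yields $F(\mathbf{1}) + D(\mathbf{1}) = \mathbf{1}$, and therefore $F(\mathbf{1}) = G(\mathbf{1}) = \mathbf{1} - D(\mathbf{1})$.

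Next I would substitute $a = b = \mathbf{1}$ into the identity. Using $H(\mathbf{1}) = D(\mathbf{1})$ and the expressions just obtained for $F(\mathbf{1})$ and $G(\mathbf{1})$, the identity collapses to $D(\mathbf{1}) = 2D(\mathbf{1}) - 2D(\mathbf{1})^2$, i.e. $D(\mathbf{1})(\mathbf{1} - 2D(\mathbf{1})) = 0$. Since $\mathcal{B}$ is a domain and $D(\mathbf{1}) \neq 0$, we must have $D(\mathbf{1}) = \tfrac{1}{2}\mathbf{1}$, and consequently $D(\mathbf{1}) = F(\mathbf{1}) = G(\mathbf{1}) = H(\mathbf{1}) = \tfrac{1}{2}\mathbf{1}$.

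Finally, substituting $a = \mathbf{1}$ back into the defining relation gives $D(b) = \tfrac{1}{2}F(b) + \tfrac{1}{2}H(b)$, so $D = \frac{F+H}{2}$. Substituting $b = \mathbf{1}$ gives $D(a) = \tfrac{1}{2}D(a) + \tfrac{1}{2}G(a)$, which rearranges to $D = G$. Combining these, $G = D = \frac{F + H}{2}$, as claimed. For the final clause, the assumption $D = H$ combined with $D = \frac{F+H}{2}$ yields $2D = F + D$, hence $F = D$, and together with $D = G$ this gives $D = F = G$.
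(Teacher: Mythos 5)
Your proof is correct and follows essentially the same route as the paper: the convergent sequence with $a=\mathbf{1}$ or $b=\mathbf{1}$ together with the domain property pins down the values at the identity, and the identities $G=D=\frac{F+H}{2}$ then follow by substituting the unit into the defining relation. The only difference is cosmetic ordering—you first establish $D(\mathbf{1})=F(\mathbf{1})=G(\mathbf{1})=H(\mathbf{1})=\tfrac{1}{2}\mathbf{1}$ and then read off $G=D$, whereas the paper deduces $G=D$ earlier by cancelling $D(\mathbf{1})\neq 0$ before computing $D(\mathbf{1})=\tfrac{\mathbf{1}}{2}$.
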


\begin{proof} 
We know that $D(ab) = D(a) F(b) + G(a)H(b)$ for all $a, b \in \mathcal{A}$. So we have
\begin{align*}
a_0 &= lim_{n \rightarrow \infty}D(a_n) \\ &= lim_{n \rightarrow \infty}(D(a_n)F(\textbf{1}) + G(a_n)H(\textbf{1})) \\ & = a_0(F(\textbf{1}) + H(\textbf{1})) \\ & = a_0(F(\textbf{1}) + D(\textbf{1})).
\end{align*}
Since $\mathcal{B}$ is a domain and $a_0 \neq 0$, we see that $F(\textbf{1}) + D(\textbf{1}) = \textbf{1}$. We also have
\begin{align*}
D(\textbf{1}) &= D(\textbf{1})F(\textbf{1}) + G(\textbf{1})H(\textbf{1})\\ & =  D(\textbf{1})F(\textbf{1}) + G(\textbf{1})D(\textbf{1}) \\ & = D(\textbf{1})(\textbf{1} - D(\textbf{1})) + G(\textbf{1})D(\textbf{1}) \\ & = D(\textbf{1}) - D(\textbf{1})^2 + G(\textbf{1})D(\textbf{1})
\end{align*}
and consequently, $(G(\textbf{1}) - D(\textbf{1}))D(\textbf{1}) = 0$. Since $\mathcal{B}$ is a domain and $D(\textbf{1}) \neq 0$, we get that $G(\textbf{1}) = D(\textbf{1})$. Let $a$ be an arbitrary element of $\mathcal{A}$. We have
\begin{align*}
D(a) &= D(a)F(\textbf{1}) + G(a)H(\textbf{1}) \\ & = D(a)(\textbf{1} - D(\textbf{1})) + G(a)D(\textbf{1}) \\ & = D(a) - D(a)D(\textbf{1}) + G(a)D(\textbf{1}).
\end{align*}
Since $\mathcal{B}$ is a domain and $D(\textbf{1}) \neq 0$, we deduce that $G = D$. Using this fact, we see that
\begin{align*}
a_0 &= lim_{n \rightarrow \infty}D(a_n) \\ &= lim_{n \rightarrow \infty}(D(\textbf{1})F(a_n) + G(\textbf{1})H(a_n)) \\ & = 2D(\textbf{1})a_0 .
\end{align*}Since $\mathcal{B}$ is a domain, $D(\textbf{1}) = \frac{\textbf{1}}{2} = G(\textbf{1})$. Therefore, we have
\begin{align*}
D(a) &= D(\textbf{1})F(a) + G(\textbf{1})H(a) \\ & = \frac{F(a)}{2} + \frac{H(a)}{2}
\end{align*}
for all $a \in \mathcal{A}$. This implies that $D = \frac{F + H}{2}$. It is clear that if $D = H$, then we get that $G = D = F$, as desired.
\end{proof}

An immediate corollary reads as follows:

\begin{corollary}\label{3} Suppose that $\mathcal{A}$ and $\mathcal{B}$ are two unital, normed algebras such that $\mathcal{B}$ is a domain and assume that $D : \mathcal{A } \rightarrow \mathcal{B}$ is a $(D, F, G, H)$-derivation such that $D(\textbf{1}) = H(\textbf{1}) \neq 0$. If there exists an element $a_0 \in \mathcal{A}$ such that $D(a_0) = F(a_0) = G(a_0) = H(a_0) \neq 0$, then $G = D = \frac{F + H}{2}$. In particular, if $D = H$, then $D = F = G$.
\end{corollary}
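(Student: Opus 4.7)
The plan is to reduce this corollary to the preceding theorem by the simplest possible device: the constant sequence. Given the element $a_0 \in \mathcal{A}$ with $D(a_0) = F(a_0) = G(a_0) = H(a_0) \neq 0$, I would define $a_n := a_0$ for every $n \in \mathbb{N}$. Then each of the four sequences $\{D(a_n)\}, \{F(a_n)\}, \{G(a_n)\}, \{H(a_n)\}$ is the constant sequence equal to the common value $D(a_0)$, and hence all four trivially converge to this single nonzero element.

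With this choice, every hypothesis of Theorem \ref{3} (the preceding theorem) is in place: $\mathcal{A}$ and $\mathcal{B}$ are unital normed algebras, $\mathcal{B}$ is a domain, $D$ is a $(D,F,G,H)$-derivation satisfying $D(\mathbf{1}) = H(\mathbf{1}) \neq 0$, and the required sequence $\{a_n\}$ has been exhibited together with its common nonzero limit. Invoking that theorem immediately yields $G = D = \frac{F+H}{2}$, and the particular case $D = H$ implies $F = H$ and hence $D = F = G$, as asserted.

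There is essentially no obstacle here, since the corollary is an immediate specialization: the only thing to check is that a single element realizing the four equal values suffices in place of a genuine convergent sequence, and the constant-sequence trick handles this without any extra computation. Accordingly, the proof I would write is just one or two lines.
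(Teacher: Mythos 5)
Your proposal is correct and matches the paper's intent exactly: the paper states this as an immediate corollary of Theorem \ref{3} with no separate proof, and the constant sequence $a_n = a_0$ is precisely the specialization needed, since all four image sequences are then constant and converge to the common nonzero value $D(a_0) \in \mathcal{B}$. The deduction of the particular case $D = H \Rightarrow D = F = G$ from $G = D = \frac{F+H}{2}$ is also fine.
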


For any $a \in \mathcal{A}$, we define the linear mappings $L_a, R_a : \mathcal{A} \rightarrow \mathcal{A}$ by $L_a(b) = ab$ and $R_a(b) = ba$ for all $b \in \mathcal{A}$. A straightforward verification shows that $R_b L_a = L_aR_b$, $\lambda L_a = L_{\lambda a}$ and $\lambda R_a = R_{\lambda a}$ for all $a, b \in \mathcal{A}$ and $\lambda \in \mathbb{C}$. Clearly, $L_a = R_a$ if and only if $a \in Z(\mathcal{A})$.

In the following theorem, we provide a characterization of the generalized $(\sigma, \tau)$-derivations using some functional equations. It is clear that a generalized $(\sigma, \tau)$-derivation can be considered as a strongly generalized derivation of order one.

\begin{theorem} 
\label{13} Let $\mathcal{A}$ and $\mathcal{B}$ be two unital rings and  $\sigma, \tau:\mathcal{A} \rightarrow \mathcal{B}$ be two homomorphisms such that $\sigma(\textbf{1}) = \tau(\textbf{1}) = \textbf{1}$. Let $f, g, h: \mathcal{A} \rightarrow \mathcal{M}$ be additive mappings satisfying $$f(ab) = g(a)\sigma(b) + \tau(a)h(b) = h(a)\tau(b) + \sigma(a) g(b)$$
for all $a, b \in \mathcal{A}$. Then $f$, $g$ and $h$ are generalized $(\sigma, \tau)$-derivations on $\mathcal{A}$. In particular, if $\sigma = \tau$, then all the mappings $f$, $g$ and $h$ are generalized $\sigma$-derivations associated with a $\sigma$-derivation.
\end{theorem}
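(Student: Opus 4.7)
My plan is to derive explicit formulae expressing $g$ and $h$ in terms of $f$ together with the constants $g(\textbf{1})$ and $h(\textbf{1})$, extract two commutation identities from the double-sided hypothesis, and then manufacture the associated $(\sigma,\tau)$-derivations by direct substitution.

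First I would set $b = \textbf{1}$ in each half of the given identity, and then $a = \textbf{1}$ in each half. Since $\sigma(\textbf{1}) = \tau(\textbf{1}) = \textbf{1}$, this produces the two-sided formulae $g(a) = f(a) - \tau(a)h(\textbf{1}) = f(a) - h(\textbf{1})\tau(a)$ and $h(a) = f(a) - \sigma(a)g(\textbf{1}) = f(a) - g(\textbf{1})\sigma(a)$ for every $a \in \mathcal{A}$. Comparing the two expressions in each pair yields the key commutation relations
\[
\tau(a)\,h(\textbf{1}) = h(\textbf{1})\,\tau(a), \qquad \sigma(a)\,g(\textbf{1}) = g(\textbf{1})\,\sigma(a) \qquad (a \in \mathcal{A}).
\]
These are the only genuinely non-trivial consequences extracted from the hypothesis; everything else is substitution.

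Next I would define the candidate associated derivation $d(x) := h(x) - h(\textbf{1})\sigma(x)$. Feeding $g(a) = f(a) - \tau(a)h(\textbf{1})$ into $f(ab) = g(a)\sigma(b) + \tau(a)h(b)$ immediately rewrites this identity as $f(ab) = f(a)\sigma(b) + \tau(a)d(b)$. To verify the Leibniz rule $d(bc) = d(b)\sigma(c) + \tau(b)d(c)$, I would expand $h(bc)$ through the alternative formula $h(x) = f(x) - g(\textbf{1})\sigma(x)$ and then substitute $f(bc) = g(b)\sigma(c) + \tau(b)h(c)$; the two commutation relations above let me regroup the four resulting terms as $d(b)\sigma(c) + \tau(b)d(c)$. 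Thus $f$ is a generalized $(\sigma,\tau)$-derivation. For $g$, the perfectly parallel computation with $\delta(x) := h(x) - h(\textbf{1})\tau(x)$ in place of $d$ produces $g(ab) = g(a)\sigma(b) + \tau(a)\delta(b)$ together with the Leibniz rule for $\delta$. For $h$, I would invoke the symmetry of the hypothesis under the simultaneous interchange $\sigma \leftrightarrow \tau$ and $g \leftrightarrow h$, which merely swaps the two halves of the given double identity and therefore reduces the statement for $h$ to the one already proved for $g$. In the special case $\sigma = \tau$, the candidates $d$ and $\delta$ coincide, so $f$, $g$ and $h$ all turn out to be generalized $\sigma$-derivations associated with the \emph{same} $\sigma$-derivation.

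The main obstacle is combinatorial rather than conceptual: the Leibniz-rule verification for $d$ generates four stray terms involving $g(\textbf{1})$ and $h(\textbf{1})$ which only cancel after $g(\textbf{1})$ is shuttled across $\sigma(b)$ and $h(\textbf{1})$ across $\tau(b)$. Identifying those commutation relations and applying them at exactly the right moments is the single delicate manoeuvre, and it is the reason the statement needs the \emph{double}-sided hypothesis rather than just one of its two halves.
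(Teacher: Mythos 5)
Your proposal is correct and follows essentially the same route as the paper: the same substitutions $a=\textbf{1}$, $b=\textbf{1}$ in both halves, the same two commutation relations $\tau(a)h(\textbf{1})=h(\textbf{1})\tau(a)$ and $\sigma(a)g(\textbf{1})=g(\textbf{1})\sigma(a)$, and the same device of subtracting a multiplication term from one of the maps to manufacture the associated $(\sigma,\tau)$-derivation and then verifying Leibniz by direct substitution. The only differences are cosmetic: you build the derivations from $h$ (your $d=h-h(\textbf{1})\sigma(\cdot)$ and $\delta=h-h(\textbf{1})\tau(\cdot)$, the latter coinciding with the paper's $D=g-\sigma(\cdot)g(\textbf{1})$ by the commutation relations), you put the derivation in the second slot for $f$ (the paper instead writes $f=\delta+R_{f(\textbf{1})}\tau$ with its $\delta=g-\tau(\cdot)g(\textbf{1})$), and you treat $h$ by the swap $\sigma\leftrightarrow\tau$, $g\leftrightarrow h$ instead of the paper's direct computation; both verifications you defer do go through. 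One small caution: the symmetry swap literally delivers $h$ in the $(\tau,\sigma)$-form $h(ab)=h(a)\tau(b)+\sigma(a)k(b)$, so if you want the stated $(\sigma,\tau)$-form you should add the one-line computation $h(ab)=h(a)\sigma(b)+\tau(a)d(b)$ (which holds with the very same $d$ as for $f$, using $g(b)-h(b)=\sigma(b)g(\textbf{1})-\tau(b)h(\textbf{1})$); this discrepancy disappears when $\sigma=\tau$, so your "in particular" conclusion is unaffected.
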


\begin{proof} 
Putting $a = \textbf{1}$ and $b = \textbf{1}$, we have the following:
\begin{align*}
&f(a) = g(a) + \tau(a)h(\textbf{1}), \\ & f(b) = h(\textbf{1})\tau(b) + g(b),
\end{align*}
for all $a, b \in \mathcal{A}$. It means that
\begin{align*}
\tau(a)h(\textbf{1}) = h(\textbf{1})\tau(a), \ \ \ \ \ \ \ \ (a \in \mathcal{A}).
\end{align*}
Similarly, we obtain that
\begin{align}\label{2.16}
g(\textbf{1}) \sigma(a) = \sigma(a) g(\textbf{1}), \ \ \ \ \ \ \ \ (a \in \mathcal{A}).
\end{align}
Since $f(a) = g(a) + \tau(a)h(\textbf{1}) = h(a) + \sigma(a) g(\textbf{1})$ for all $a \in \mathcal{A}$, we get that
\begin{align}\label{2.17}
h(a) = g(a) + \tau(a)h(\textbf{1}) - \sigma(a) g(\textbf{1}), \ \ \ \ \ \ \ \ (a \in \mathcal{A}),
\end{align}
and further, we have $$g(a)\sigma(b) + \tau(a)h(b) = f(ab) = g(ab) + \tau(ab)h(\textbf{1}).$$ Hence, for all $a, b \in \mathcal{A}$, we have the following:
\begin{align*}
g(ab) & = g(a)\sigma(b) + \tau(a)h(b) - \tau(a)\tau(b)h(\textbf{1}) \\ & = g(a)\sigma(b) + \tau(a)(g(b) + \tau(b)h(\textbf{1}) - \sigma(b) g(\textbf{1})) - \tau(a)\tau(b)h(\textbf{1}) \ \ \ \ (see \ (\ref{2.17})) \\ & = g(a)\sigma(b) + \tau(a)(g(b) - \sigma(b) g(\textbf{1})),
\end{align*}
which means that
\begin{align}\label{2.18}
g(ab) = g(a)\sigma(b) + \tau(a)(g(b) - \sigma(b) g(\textbf{1})).
\end{align}
Define a mapping $D:\mathcal{A} \rightarrow \mathcal{M}$ by $D(a) = g(a) - \sigma(a) g(\textbf{1})$. We have the following:
\begin{align*}
D(ab) & = g(ab) - \sigma(ab) g(\textbf{1}) \\ & = g(a)\sigma(b) + \tau(a)g(b) - \tau(a) \sigma(b) g(\textbf{1}) - \sigma(a)\sigma(b) g(\textbf{1}) \\ & = (g(a) - \sigma(a) g(\textbf{1}))\sigma(b) + \tau(a)(g(b) - \sigma(b) g(\textbf{1})) \\ & = D(a)\sigma(b) + \tau(a) D(b),
\end{align*}
which means that $D$ is a $(\sigma, \tau)$-derivation. This fact with (\ref{2.18}) implies that $g$ is a generalized $(\sigma, \tau)$-derivation associated with the $(\sigma, \tau)$-derivation $D$. Indeed, we have $$ g(ab) = g(a)\sigma(b) + \tau(a)D(b).$$ Now, we define a mapping $\delta:\mathcal{A} \rightarrow \mathcal{M}$ by $\delta(a) = g(a) - \tau(a)g(\textbf{1}).$ Using (\ref{2.16}) and (\ref{2.18}), we have
\begin{align*}
g(ab) & = g(a)\sigma(b) + \tau(a)g(b) - \tau(a)\sigma(b) g(\textbf{1}) \\ & = (g(a) - \tau(a)g(\textbf{1}))\sigma(b) + \tau(a)g(b) \\ & = \delta(a)\sigma(b) + \tau(a)g(b).
\end{align*}
A straightforward verification shows that $$\delta(ab) = \delta(a)\sigma(b) + \tau(a) \delta(b)$$ for all $a, b \in \mathcal{A}$. We know that $g(ab) = g(a)\sigma(b) + \tau(a)(h(b) - \tau(b)h(\textbf{1}))$ and defining a mapping $d:\mathcal{A} \rightarrow \mathcal{M}$ by $d(a) = h(a) - \tau(a) h(\textbf{1})$, we have $g(ab) = g(a)\sigma(b) + \tau(a)d(b)$  for all $a, b \in \mathcal{A}$. On the other hand, we know that $g$ is a generalized $(\sigma, \tau)$-derivation associated with the $(\sigma, \tau)$-derivation $D$, i.e., $g(ab) = g(a)\sigma(b) + \tau(a)D(b)$ for all $a, b \in \mathcal{A}$. So we infer that $d = D$ and this means that $d$ is also a $(\sigma, \tau)$-derivation. Hence $h(a) = d(a) + \tau(a)h(\textbf{1})$ is a generalized $(\sigma, \tau)$-derivation associated with the $(\sigma, \tau)$-derivation $d = D$ and we see that $$h(ab) = D(a)\sigma(b) + \tau(a)h(b),$$ for all $a, b \in \mathcal{A}$. Furthermore, we know that $f(ab) = h(ab) + \sigma(ab) g(\textbf{1})$ for all $a, b \in \mathcal{A}$. Thus we have $$h(a)\tau(b) + \sigma(a) g(b) = f(ab) = h(ab) + \sigma(ab) g(\textbf{1})$$ for all $a, b \in \mathcal{A}$. So we can write that
\begin{align*}
h(ab) & = h(a)\tau(b) + \sigma(a)g(b) - \sigma(ab) g(\textbf{1}) \\ & = h(a)\tau(b) + \sigma(a)(g(b) - \sigma(b) g(\textbf{1})) \\ & = h(a)\tau(b) + \sigma(a)D(b).
\end{align*}
We know that the linear mapping $\delta = g - R_{g(\textbf{1})}\tau$ is a $(\sigma, \tau)$-derivation and further, $f = g + R_{h(\textbf{1})}\tau$. Using these two previous equations, we can thus deduce that
\begin{align}\label{2.19}
f = \delta + R_{g(\textbf{1})}\tau + R_{h(\textbf{1})}\tau = \delta + R_{f(\textbf{1})}\tau,
\end{align}
which means that $f$ is a generalized $(\sigma, \tau)$-derivation associated with the $(\sigma, \tau)$-derivation $\delta$. Moreover, since $g = D + R_{g(\textbf{1})}\sigma$, we see that
\begin{align}\label{2.20}
f = g + R_{h(\textbf{1})}\tau = D + R_{g(\textbf{1})}\sigma + R_{h(\textbf{1})}\tau.
\end{align}
Now, suppose that $\sigma = \tau$. Then we have $\delta = g - R_{g(\textbf{1})}\tau = g - R_{g(\textbf{1})}\sigma = D$. It follows from (\ref{2.19}) or (\ref{2.20}) that
\begin{align*}
f = D + R_{f(\textbf{1})}\sigma.
\end{align*}
So we have
\begin{align*}
f(ab) & = D(ab) + R_{f(\textbf{1})}\sigma(ab) \\ & = D(a) \sigma(b) + \sigma(a)D(b) + f(\textbf{1})\sigma(a)\sigma(b) \\ & = (D(a) + f(\textbf{1})\sigma(a))\sigma(b) + \sigma(a)D(b) \\ & = f(a)\sigma(b) + \sigma(a)D(b)
\end{align*}
for all $a, b \in \mathcal{A}$. Also, note that $f(\textbf{1}) \sigma(a) = \sigma(a)f(\textbf{1})$ for all $a \in \mathcal{A}$, since $\sigma = \tau$. One can easily deduce that
\begin{align*}
f(ab) & = D(a)\sigma(b) + \sigma(a)f(b)
\end{align*}
for all $a, b \in \mathcal{A}$. Therefore, all the mappings $f$, $g$ and $h$ are generalized $\sigma$-derivations associated with the $\sigma$-derivation $D$.
\end{proof}

\section{Conclusion}

Let $\mathcal{A}$ and $\mathcal{B}$ be two unital topological algebras and  $f$, $g$ and $h$ be mappings which satisfy the equations of Theorem \ref{13} and $\sigma (\textbf{1}) = \textbf{1}= \tau(\textbf{1})$. If we have all the conditions under which every $(\sigma, \tau)$-derivation is continuous, then the mappings $f$, $g$ and $h$ are continuous under the same conditions.

\medskip

\section*{Declarations}

\medskip

\noindent \textbf{Availablity of data and materials}\newline
\noindent Not applicable.

\medskip

\noindent \textbf{Human and animal rights}\newline
\noindent We would like to mention that this article does not contain any studies
with animals and does not involve any studies over human being.

\medskip

\noindent \textbf{Conflict of interest}\newline
\noindent The authors declare that they have no competing interests.

\medskip

\noindent \textbf{Fundings} \newline
\noindent The authors declare that there is no funding available for this paper.

\medskip

\noindent \textbf{Authors' contributions}\newline
\noindent The authors equally conceived of the study, participated in its
design and coordination, drafted the manuscript, participated in the
sequence alignment, and read and approved the final manuscript. 

\medskip

{\footnotesize


\begin{thebibliography}{99}


\bibitem{B1} W. G. Bade,  P. C. Curtis, {\it Homomorphisms of commutative Banach algebras}, Amer.
J. Math., {\bf 82} (1960), 589--608.

\bibitem{bo} N. H. Bingham, A. J. Ostaszewski,  {\it Additivity, subadditivity and linearity: automatic continuity and quantifier weakening}, Indag.  Math. (N.S.), {\bf 29} (2018), no. 2, 687--713.

\bibitem{B2} O. Bratteli,  D. R. Robinson, {\it Operator Algebras and Quantum Statistical Mechanics, Vol. I}, Springer Verlag, New York, 1987.

\bibitem{B3} O. Bratteli,  D. R. Robinson, {\it Operator Algebras and Quantum Statistical Mechanics, Vol. II}, Springer Verlag, New York, 1997.



\bibitem{D}  H. G. Dales, {\it Banach Algebras and Automatic Continuity}, Math. Soc. Monographs, New Series, 24, Oxford University Press, Oxford, 2000.

\bibitem{D1} H. G. Dales, P. Aiena, J. Eschmeier, K. Laursen,  G. A. Willis. {\it Introduction to Banach Algebras, Operators and Harmonic Analysis}, Cambridge University Press, Cambridge, 2003.


\bibitem{E} O. Elchinger, K. Lundengård, A. Makhlouf,  S. Silvestrov, {\it Brackets with $(\tau,\sigma)$-derivations and $(p,q)$-deformations of Witt and Virasoro algebras}, Forum Math.,  \textbf{28} (2016), no. 4,  657--673.


\bibitem{EP18} A. B. A. Essaleh,  A. M. Peralta, {\it Linear maps on $C^{\ast}$-algebras which are derivations or
triple derivations at a point}, Linear Algebra Appl.,  \textbf{538} (2018), 1--21.

\bibitem{fh} J. J. Font, S. Hern\'{a}ndez,  {\it Automatic continuity and representation of certain linear isomorphisms between group algebras}, Indag.  Math. (N.S.), {\bf 6} (1995), no. 4, 397--409.


\bibitem{GP13} J. J. Garc\'{e}s,   A. M. Peralta, {\it Generalised triple homomorphisms and derivations}, Canad. J. Math.,  \textbf{65} (2013), no. 4,  783--807.

\bibitem{GH18} M. Gholampour, S. Hejazian, {\it Continuity of generalized derivations on $JB^{\ast}$-algebras},
Quaest. Math.,  \textbf{41} (2018), 227-–238.

\bibitem{H5}J. T. Hartwig, D. Larsson, S. D. Silvestrov, {\it Deformations of Lie algebras using $\sigma$-derivations}, J. Algebra,  \textbf{295} (2006), 314--361.

\bibitem{He} S. Hejazian, A. R. Janfada, M. Mirzavaziri, {\it Achivement of continuity of $(\phi,\psi)$-derivations},  Bull. Belg. Math. Soc. Simon Stevin,  \textbf{14} (2007), 641--652.

\bibitem{Hel}  M. Heller, T. Miller, L. Pysiak,  W. Sasin, {\it Generalized derivations and general relativity}, Canad. J. Phys., \textbf{91} (2013), no. 10, 757--763.

\bibitem{H4} A. Hosseini, {\it Automatic continuity of $(\delta, \varepsilon$)-double derivations on $C^{\ast}$-algebras}, U.P.B. Sci. Bull., Ser. A., {\bf 79} (2017), no. 3, 67--72.

\bibitem{H3} A. Hosseini, {\it A new proof of Singer-Wermer theorem with some results on $\{g, h\}$-derivations}, Int. J. Nonlinear Anal. Appl., {\bf 11} (2020), no. 1, 453--471.

\bibitem{H4} A. Hosseini, {\it A note on automatic continuity of $(\psi, \phi)$-derivations}, Rend. Circ. Mat. Palermo (2), {\bf 72} (2023), no. 1, 71--79.

\bibitem{Hu} C. Hou, Q. Ming, {\it Continuity of $(\alpha, \beta)$-derivations of operator algebras},  J.
Korean Math. Soc.,  \textbf{48} (2011), 823--835.

\bibitem{J} B. E. Johnson, A. M. Sinclair, {\it Continuity of derivations and a problem of
Kaplansky},  Amer. J. Math., \textbf{90} (1968), 1067--1073.

\bibitem{J87} B. E. Johnson, {\it Continuity of generalized homomorphisms}, Bull. London Math. Soc., \textbf{19} (1987), 67--71.

\bibitem{K} I. Kaplansky, {\it Functional Analysis, Some Aspects of Analysis and Probability}, Surveys in Applied Mathematics. Vol. 4, London, 1958. MR0101475 (21:286)

\bibitem{Ke} E. Kreyszig, {\it Introductory Functional Analysis with Applications}, John  Wiley and Sons, New York, 1979.



\bibitem{mir} M. Mirzavaziri,  E. Omidvar Tehrani, {\it $\delta$-Double derivations on $C^{\ast}$-algebras}, Bull. Iran. Math. Soc., \textbf{35}  (2009), no. 1, 147--154.

\bibitem{P} A. M. Peralta,  B. Russo, {\it Automatic continuity of derivations on $C^{*}$-algebras and $JB^{*}$-triples}, J. Algebra, \textbf{399} (2014), 960--977.

\bibitem{R}  J. R. Ringrose, {\it Automatic continuity of derivations of operator algebras}, J. London Math.
Soc., \textbf{5} (1972), 432--438.

\bibitem{S}S. Sakai, {\it On a conjecture of Kaplansky}, T$\hat{o}$hoku Math. J., \textbf{12} (1960), 31--33.

\bibitem{V} A. R. Villena, {\it  Automatic continuity in associative and nonassociative context},
Irish Math. Soc. Bull.,  \textbf{46} (2001), 43--76.


\end{thebibliography}
\end{document}